\newtheorem{theorem}{Theorem}[section]
\newtheorem{lemma}[theorem]{Lemma}
\newtheorem{question}{Question}
\newtheorem{claim}[theorem]{Claim}
\newtheorem{proposition}[theorem]{Proposition}
\newtheorem{corollary}[theorem]{Corollary}
\theoremstyle{definition}
\newtheorem{definition}[theorem]{Definition}
\newtheorem{example}[theorem]{Example}
\theoremstyle{remark}
\newtheorem{remark}[theorem]{Remark}
\numberwithin{equation}{section}
\def\fnote#1{\footnote}
\def\ignora#1{}
\def\n3#1{\left\vert  \! \left\vert \! \left\vert \, #1 \, \right\vert \!
  \right\vert \! \right\vert }
\begin{document}

\title{ Diametral diameter two properties in Banach spaces }
%\author{}
%\address{Universidad de Granada, Facultad de Ciencias.
%Departamento de Matem\'{a}tica Aplicada, 18071-Granada (Spain)}

\author{Julio Becerra Guerrero, Gin{\'e}s L{\'o}pez-P{\'e}rez and Abraham Rueda Zoca}
\address{Universidad de Granada, Facultad de Ciencias.
Departamento de An\'{a}lisis Matem\'{a}tico, 18071-Granada
(Spain)} \email{juliobg@ugr.es, glopezp@ugr.es
arz0001@correo.ugr.es}
\thanks{The first author was partially supported by MEC (Spain) Grant MTM2014-58984-P and Junta de Andaluc\'{\i}a grants
FQM-0199, FQM-1215. The second author was partially supported by
MEC (Spain) Grant MTM2015-65020-P and Junta de Andaluc\'{\i}a Grant
FQM-185.} \subjclass{46B20, 46B22. Key words:
    diameter two properties, diametral points, finite rank projections.}

\maketitle\markboth{J. Becerra, G. L\'{o}pez and A.
Rueda}{Diametral diameter two properties in Banach spaces.}

\begin{abstract}
The aim of this note is to provide several variants of the
diameter two properties for Banach spaces. We study such properties looking for the abundance of diametral
points, which holds in the setting of Banach spaces with the Daugavet property, for example, and  we introduce the diametral diameter two  properties in Banach spaces, showing for these new properties stability results, inheritance to subspaces and characterizations in terms of finite rank projections.
\end{abstract}

\section{Introduction}

\bigskip
\par

We recall that a Banach space $X$ satisfies the strong diameter
two pro\-per\-ty (SD2P), respectively diameter two property (D2P),
slice-diameter two property (LD2P),  if every convex combination
of slices, respectively every nonempty relatively weakly open
subset, every slice,  in the unit ball of $X$ has diameter $2$.
The  weak-star slice diameter two property      ($w^*$-LD2P),
weak-star diameter two property ($w^*$-D2P) and weak-star strong
diameter two property ($w^*$-SD2P) for a dual Banach space are
defined as usual, changing slices by $w^*$-slices and weak open
subsets by $w^*$- open subsets in the unit ball.  It is known that
the above six properties are extremely different as it is proved
in \cite{avd}.

Even though diameter two property theory is a very recent topic in
geometry of Banach spaces, a lot of nice results have appeared in
the last few years (e.g. \cite{abl,avd,blr3,blro,hlp}). Moreover,
it turns out that there are quite lot of examples of Banach spaces
with such properties as infinite-dimensional $C^*$-algebras
\cite{blro}, non-reflexive $M$-embedded spaces \cite{gines} or
Daugavet spaces \cite{shi}. Last example is quite important
because Banach spaces with the Daugavet property actually satisfy
the diameter two properties in a stronger way. Indeed, as it was
pointed out in \cite{ik}, Banach spaces with Daugavet property
verify that each slice of the unit ball $S$ has diameter two and
each norm-one element of $S$ is diametral, i.e. given $x\in S\cap
S_X$ it follows that
\begin{equation}\label{defidiametral}
diam(S)=\sup\limits_{y\in S} \Vert y-x\Vert.
\end{equation}
We will say that a Banach space $X$ has the diametral local
diameter two property (DLD2P) whenever $X$ verifies the above condition.
It is known that this property is stable by taking $\ell_p$
sums \cite{ik} and that is inherited to almost isometric ideals
\cite{ahntt}. Moreover, this property is different to the Daugavet
property (see again \cite{ik}).

The aim of this note is to provide extensions of the diameter two
properties in the way exposed above and make an intensive study
of such properties. Indeed, in sections 2 and 3, we shall analyze
extensions of the D2P and SD2P, respectively,  by the existence of diametral
points. Whilst we shall define the diametral diameter two property
by the obvious generalization in view of the diametral slice
diameter two property, we provide a natural extension of the SD2P
in terms of diametrality in some different way. Given a Banach
space $X$ we will say that $X$ has the diametral strong diameter
two property (DSD2P) whenever given $C$ a convex combination of
non-empty relatively weakly open subsets  of $B_X$, $x\in C$ and
$\varepsilon\in\mathbb R^+$ we can find $y\in C$ such that $\Vert
y-x\Vert>1+\Vert x\Vert-\varepsilon$. This alternative definition
is given because a convex combination of non-empty relatively
weakly open subsets of the unit ball of a Banach space does not
have to intersect to the unit sphere and it is quite clear that
(\ref{defidiametral}) implies $\Vert x\Vert=1$. We will get some
results of stability of diametral diameter two properties in terms
of $\ell_p$ sums and inheritance to subspaces. Moreover, we will
exhibit some characterizations of such properties in terms of
finite-rank projections or weakly convergent nets. 
Finally, section 4. is devoted to exhibit some open problems and
remarks.

We shall introduce some notation. We consider real Banach spaces,
$B_X$ (resp. $S_X$) denotes the closed unit ball (resp. sphere) of
the Banach space $X$. If $Y$ is a subspace of a Banach space $X$,
$X^*$ stands for the dual space of $X$. A slice of a bounded
subset $C$ of $X$ is a set of the form
$$S(C,f,\alpha):=\{x\in C:\ f( x)>M-\alpha\},$$
where $f\in X^*$, $f\neq 0$, $M=\sup_{x\in C}f(x)$ and $\alpha>0$. If $X=Y^*$ is a dual space for some Banach space $Y$ and $C$ is a bounded subset of $X$, a $w^*$-slice of $C$ is a set of the form
$$S(C,y,\alpha):=\{f\in C:f(y)>M-\alpha\},$$
where  $y\in Y$, $y\neq 0$, $M=\sup_{f\in C}f(y)$ and $\alpha>0$. $w$ (resp. $w^*$) denotes the weak (resp. weak-star) topology of a Banach space.

It is proved in \cite[Corollary 2.2]{blr3} that a Banach space $X$
has the SD2P if, and only if, $X^*$ has an octahedral norm.

Moreover, it is proved in \cite[Theorems 3.2 and 3.4]{hlp} that a
Banach space $X$ has the LD2P (respectively the D2P) if, and only
if, $X^*$ has a locally octahedral (respectively weakly
octahedral) norm.

Let $X$ be a Banach space and $Y\subseteq X$ a closed subspace.
According to \cite{aln}, we will say that $Y$ is an \textit{almost
isometric ideal in $X$} if for each $\varepsilon>0$ and
$E\subseteq X$ a finite-dimensional subspace there exists a linear
and bounded operator $T:E\longrightarrow Y$ satisfying the
following conditions:

\begin{enumerate}
\item $T(e)=e$ for each $e\in E\cap Y$.

\item For each $e\in E$ one has
$$\frac{1}{1+\varepsilon}\Vert e\Vert\leq \Vert T(e)\Vert\leq (1+\varepsilon)\Vert e\Vert.$$
\end{enumerate}

In spite of the fact that almost isometric ideals in Banach spaces
do not have to be closed, by a perturbation argument it follows
that a non-closed subspace is an almost isometric ideal if, and only if, its
closure is also an almost isometric ideal. Hence, we will consider
only closed almost isometric ideals.

In \cite{aln} is proved that each diameter two property as well as
Daugavet property are inherited to almost isometric ideals from
the whole space. This is a consequence of the following

\begin{theorem}\label{propiai}\cite[Theorem 1.4]{aln}
Let $X$ be a Banach space and $Y\subseteq X$ an almost ideal in
$X$.
Then there exists $\varphi:Y^*\longrightarrow X^*$ a Hahn-Banach
operator such that, for each $\varepsilon>0$, for each $E\subseteq
X$ finite-dimensional subspace and each $F\subseteq Y^*$ finite
dimensional subspace, there exists $T:E\longrightarrow Y$
verifying the following:

\begin{enumerate}
\item $T(e)=e$ for each $e\in E\cap Y$.

\item For each $e\in E$ one has
$$\frac{1}{1+\varepsilon}\Vert e\Vert\leq \Vert T(e)\Vert\leq (1+\varepsilon)\Vert e\Vert.$$

\item For each $e\in E$ and $f\in F$ it follows
$$\varphi(f)(e)=f(T(e)).$$
\end{enumerate}

\end{theorem}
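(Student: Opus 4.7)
The plan is to build $\varphi$ as a weak-star accumulation point of compositions with the local ideal operators, and then upgrade the resulting approximate compatibility to the exact condition (3) via a finite-dimensional linear perturbation. The overall structure parallels Dean's proof of the Principle of Local Reflexivity, with $\varphi$ playing the role that the canonical inclusion $X \hookrightarrow X^{**}$ plays there.

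\textbf{Construction of $\varphi$.} Direct the set $\Lambda$ of pairs $\lambda = (E, \varepsilon)$, $E \subseteq X$ finite-dimensional and $\varepsilon > 0$, by declaring $(E_1,\varepsilon_1) \leq (E_2,\varepsilon_2)$ iff $E_1 \subseteq E_2$ and $\varepsilon_1 \geq \varepsilon_2$; for each $\lambda$ fix $T_\lambda : E \to Y$ satisfying (1) and (2). Fix a non-principal ultrafilter $\mathcal{U}$ on $\Lambda$ refining the order filter. For each $x \in X$ and $f \in Y^*$ the value $f(T_\lambda(x))$ is defined once $x \in E$ and bounded in modulus by $(1+\varepsilon)\|f\|\,\|x\|$, so we may set
\[\varphi(f)(x) := \lim_{\mathcal{U}} f(T_\lambda(x)).\]
Linearity in $x$ and in $f$ is immediate; letting $\varepsilon$ shrink along $\mathcal{U}$ yields $\|\varphi(f)\| \leq \|f\|$. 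Condition (1) forces $T_\lambda(y) = y$ as soon as $y \in Y$ lies in $E$, so $\varphi(f)(y) = f(y)$, proving $\varphi(f)|_Y = f$ and therefore $\|\varphi(f)\| = \|f\|$. Thus $\varphi$ is a Hahn-Banach operator.

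\textbf{Producing $T$ from $(\varepsilon, E, F)$.} Fix a basis $\{e_1,\dots,e_k,e_{k+1},\dots,e_n\}$ of $E$ whose first $k$ vectors span $E\cap Y$, and a basis $\{f_1,\dots,f_m\}$ of $F$. By the ultrafilter definition of $\varphi$, for any $\delta>0$ there is $T_0 : E \to Y$ satisfying (1) and (2) with parameter $\varepsilon/2$ and with
\[|f_i(T_0(e_j)) - \varphi(f_i)(e_j)| < \delta \quad\text{for all } i\leq m,\ j\leq n.\]
Since $\varphi(f_i)|_Y = f_i$ and $T_0$ fixes $E\cap Y$, the residuals $c_{i,j} := \varphi(f_i)(e_j) - f_i(T_0(e_j))$ vanish whenever $j \leq k$. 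Linear independence of $f_1,\dots,f_m$ in $Y^*$ makes the evaluation map $\rho : Y \to \mathbb{R}^m$, $\rho(y) = (f_i(y))_i$, surjective, so a continuous linear section provides a constant $C=C(F)$ with the property that every $c\in \mathbb{R}^m$ is hit by some $y\in Y$ with $\|y\|\leq C\max_i|c_i|$. Pick $y_{k+1},\dots,y_n \in Y$ with $\rho(y_j) = (c_{i,j})_i$ and $\|y_j\|\leq Cm\delta$, and define $g : E \to Y$ by $g(e_j)=0$ for $j\leq k$ and $g(e_j)=y_j$ for $j>k$. Then $T := T_0 + g$ satisfies (1) (because $g$ vanishes on $E\cap Y$), satisfies (2) with the original $\varepsilon$ (shrink $\delta$ so that $\|g\|$ is absorbed into the remaining $\varepsilon/2$ margin in both the upper and lower estimates), and by construction $f_i(T(e_j)) = f_i(T_0(e_j)) + c_{i,j} = \varphi(f_i)(e_j)$, which by bilinearity extends (3) to all of $F \times E$.

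\textbf{Main obstacle.} The step that demands real care is upgrading from the approximate compatibility that the ultrafilter construction delivers to the exact equality required in (3). The correction $g$ must live in $Y$ while vanishing on $E\cap Y$; this is possible precisely because $\varphi(f)|_Y = f$ automatically makes the residuals $c_{i,j}$ vanish for $j\leq k$, reducing the problem to a finite linear system on a complement of $E\cap Y$ in $E$, which is solvable thanks to the linear independence of a basis of $F$ inside $Y^*$ and the consequent surjectivity of $\rho$.
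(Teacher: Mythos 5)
The paper offers no proof of this statement: it is imported verbatim as \cite[Theorem 1.4]{aln}, so there is nothing internal to compare your argument against. Your proof --- an ultrafilter limit of the local operators to build $\varphi$, followed by a finite-rank correction supported on a complement of $E\cap Y$ in $E$ that upgrades approximate compatibility to the exact identity (3) --- is correct, and it is essentially the standard argument (the one in \cite{aln}), in the spirit of the principle of local reflexivity. The two load-bearing points both check out: the sets $\{\lambda:\ E\subseteq E_\lambda,\ \varepsilon_\lambda\le\varepsilon/2,\ |f_i(T_\lambda(e_j))-\varphi(f_i)(e_j)|<\delta \ \forall i,j\}$ are finite intersections of members of $\mathcal U$ and hence nonempty, which yields $T_0$; and the linear independence of $f_1,\dots,f_m$ in $Y^*$ makes the evaluation map $\rho:Y\to\mathbb R^m$ surjective with a norm-controlled right inverse, so the correction $g$ vanishes on $E\cap Y$, has norm $O(\delta)$ (with constants depending only on $E$, its basis and $F$, all fixed before $\delta$), and is absorbed into the $\varepsilon/2$ margin left in (2).
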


We shall also exhibit the following known result which will be
used several times in the following. A proof can be found in
\cite[Lemma 2.1]{ik}.

\begin{lemma}\label{lemakadets}

Let $X$ be a Banach space. Consider $x^*\in S_{X^*},
\varepsilon\in\mathbb R^+$ and $x\in S(B_X,x^*, \varepsilon)\cap
S_X$. Then, given $0<\delta<\varepsilon$, there exists $y^*\in
S_{X^*}$ such that
$$x\in S(B_X,y^*,\delta)\subseteq S(B_X,x^*,\varepsilon).$$
\end{lemma}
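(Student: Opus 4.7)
The plan is to produce $y^*$ as a normalized perturbation of $x^*$ in the direction of a Hahn--Banach norming functional for $x$, and to locate the correct perturbation parameter by a one-variable continuity argument. Using $x \in S_X$, fix $\phi \in S_{X^*}$ with $\phi(x) = 1$ by Hahn--Banach, and for $\lambda \geq 0$ consider the candidate
\[
y_\lambda^{*} := \frac{x^{*} + \lambda \phi}{\|x^{*} + \lambda \phi\|} \in S_{X^*},
\]
the aim being to choose $\lambda$ so that $y_\lambda^{*}(x) > 1 - \delta$ and $S(B_X, y_\lambda^{*}, \delta) \subseteq S(B_X, x^{*}, \varepsilon)$ hold simultaneously.

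Set $\varepsilon' := 1 - x^{*}(x) \in [0, \varepsilon)$, $N(\lambda) := \|x^{*} + \lambda \phi\|$, and $G(\lambda) := (1-\delta)\,N(\lambda) - \lambda$. A direct computation reduces both conditions to inequalities on $G(\lambda)$. Since $y_\lambda^{*}(x) = (1 - \varepsilon' + \lambda)/N(\lambda)$, the condition $y_\lambda^{*}(x) > 1 - \delta$ is equivalent to $G(\lambda) < 1 - \varepsilon'$. For the inclusion, using only $\phi(w) \leq 1$ on $B_X$,
\[
\sup\{y_\lambda^{*}(w) : w \in B_X,\; x^{*}(w) \leq 1 - \varepsilon\} \leq \frac{(1-\varepsilon) + \lambda}{N(\lambda)},
\]
so the sufficient condition $G(\lambda) \geq 1 - \varepsilon$ forces $S(B_X, y_\lambda^{*}, \delta) \subseteq S(B_X, x^{*}, \varepsilon)$. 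Since $\varepsilon' < \varepsilon$, the target window $[1 - \varepsilon,\, 1 - \varepsilon')$ is non-empty.

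It remains to produce $\lambda \geq 0$ with $G(\lambda)$ in this window. Observe that $G(0) = 1 - \delta$, and the triangle inequality $N(\lambda) \leq 1 + \lambda$ yields $G(\lambda) \leq 1 - \delta - \lambda\delta \to -\infty$. In the easy sub-case $\varepsilon' < \delta$ one has $G(0) = 1 - \delta < 1 - \varepsilon'$ together with $G(0) = 1 - \delta \geq 1 - \varepsilon$, so $\lambda = 0$ (i.e.\ $y^{*} = x^{*}$) already works. Otherwise $G(0) \geq 1 - \varepsilon'$, and letting $\lambda_0 := \inf\{\lambda \geq 0 : G(\lambda) < 1 - \varepsilon'\}$, continuity of $G$ forces $G(\lambda_0) = 1 - \varepsilon' > 1 - \varepsilon$; hence any $\lambda$ slightly past $\lambda_0$ satisfies $G(\lambda) < 1 - \varepsilon'$ by the definition of the infimum and $G(\lambda) > 1 - \varepsilon$ by continuity, giving the required $y^{*} = y_\lambda^{*}$.

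The main obstacle I expect is confirming that the crude estimates $\phi(w) \leq 1$ and $N(\lambda) \leq 1 + \lambda$ furnish enough slack in $G$: the argument uses no fine geometric information about $X$, and the required slack is provided precisely by the hypothesis $\varepsilon' < \varepsilon$, i.e.\ by the fact that $x$ lies strictly inside the original slice. A minor care is also required in the boundary situation $\varepsilon' = \delta$, but the bound $G(\lambda) \leq 1 - \delta - \lambda\delta$ shows that any $\lambda > 0$ pushes $G$ strictly below $1 - \varepsilon'$, so this case is absorbed into the same infimum construction.
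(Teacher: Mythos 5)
The paper does not actually prove this lemma; it only points to \cite[Lemma 2.1]{ik}, so your argument has to stand on its own, and it does. The normalized perturbation $y^{*}_{\lambda}=(x^{*}+\lambda\phi)/\Vert x^{*}+\lambda\phi\Vert$ with $\phi$ a norming functional for $x$ is exactly the device used in the standard proofs from the Daugavet-property literature, and your reduction of the two requirements to the two-sided condition $1-\varepsilon\le G(\lambda)<1-\varepsilon'$ on $G(\lambda)=(1-\delta)N(\lambda)-\lambda$ is correct: membership $y^{*}_{\lambda}(x)>1-\delta$ is equivalent to $G(\lambda)<1-\varepsilon'$, the crude bound $\phi\le 1$ on $B_X$ makes $G(\lambda)\ge 1-\varepsilon$ sufficient for the inclusion, the window is non-empty precisely because $\varepsilon'<\varepsilon$, and $G(0)=1-\delta$ together with $G(\lambda)\le 1-\delta-\delta\lambda\to-\infty$ and continuity of $G$ produces the required $\lambda$. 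The only real difference from the usual treatment is that the classical proofs choose $\lambda$ explicitly (large in terms of $\delta$ and $\varepsilon-\varepsilon'$) and estimate directly, whereas you locate $\lambda$ by an intermediate-value argument; this spares you the bookkeeping constants at no cost. One phrase should be tightened: from $\lambda_0=\inf\{\lambda\ge 0:\ G(\lambda)<1-\varepsilon'\}$ and $G(\lambda_0)=1-\varepsilon'$ you claim that \emph{any} $\lambda$ slightly past $\lambda_0$ satisfies $G(\lambda)<1-\varepsilon'$ ``by the definition of the infimum''; the definition only guarantees \emph{some} such $\lambda$ arbitrarily close to $\lambda_0$ from above, which, combined with continuity (giving $G>1-\varepsilon$ on a right neighbourhood of $\lambda_0$), is all you need. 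Alternatively, note that $N$, hence $G$, is convex, so the strict sublevel set is an interval unbounded above and really does equal $(\lambda_0,\infty)$. This is a cosmetic point, not a gap.
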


Similarly, a dual version of Lemma above can be statedas follows.

\begin{lemma}\label{lemadualkadets}
Let $X$ be a Banach space. Consider $x\in S_{X}, 
\varepsilon\in\mathbb R^+$ and $x^*\in
S(B_{X^*},x,\varepsilon)\cap S_{X^*}$. Then, given
$0<\delta<\varepsilon$, there exists $y\in S_{X}$ such that
$$x^*\in S(B_{X^*},y,\delta)\subseteq S(B_{X^*},x,\varepsilon).$$
\end{lemma}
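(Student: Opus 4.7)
The plan is to mirror the argument of Lemma~\ref{lemakadets}, swapping the roles of $X$ and $X^*$. Given $x\in S_X$, $x^*\in S_{X^*}$ with $x^*(x)>1-\varepsilon$, and $0<\delta<\varepsilon$, I would first pick a small parameter $\eta>0$ and a near-norming vector $\psi\in S_X$ for $x^*$, so that $x^*(\psi)>1-\eta$; such $\psi$ exists because $\|x^*\|=1$. Then I would take $y:=y_\lambda/\|y_\lambda\|$, where $y_\lambda:=(1-\lambda)x+\lambda\psi$, for a parameter $\lambda\in(0,1)$ to be chosen.

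For the membership $x^*\in S(B_{X^*},y,\delta)$, I would use $\|y_\lambda\|\leq 1$ together with the lower bound
\[
x^*(y_\lambda)=(1-\lambda)x^*(x)+\lambda x^*(\psi)>1-(1-\lambda)\varepsilon-\lambda\eta+(1-\lambda)s,
\]
where $s:=x^*(x)-(1-\varepsilon)>0$ is the available slack, to conclude $x^*(y)>1-\delta$ after picking $\lambda$ and $\eta$ suitably. For the slice inclusion $S(B_{X^*},y,\delta)\subseteq S(B_{X^*},x,\varepsilon)$, given $z^*\in B_{X^*}$ with $z^*(y)>1-\delta$ one unpacks this to $(1-\lambda)z^*(x)+\lambda z^*(\psi)>(1-\delta)\|y_\lambda\|$, and combining with $z^*(\psi)\leq 1$ yields
\[
z^*(x)>\frac{(1-\delta)\|y_\lambda\|-\lambda}{1-\lambda}.
\]
To reach $z^*(x)>1-\varepsilon$, it suffices to secure the estimate $\|y_\lambda\|\geq\frac{1-\varepsilon(1-\lambda)}{1-\delta}$.

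The main obstacle is the simultaneous satisfaction of these two parameter-dependent inequalities. Because the crude bound $\|y_\lambda\|\geq x^*(y_\lambda)$ alone does not quite close the estimate, one must exploit the strict inequality $x^*(x)>1-\varepsilon$---which provides the positive slack $s$---and choose $\eta$ sufficiently small (compared with $s(\varepsilon-\delta)$) so that a valid $\lambda$ exists in the narrow window where both conditions hold. This delicate parameter juggling is exactly the analogue of the estimate driving the proof of Lemma~\ref{lemakadets} in \cite{ik}.
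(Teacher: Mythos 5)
The paper does not actually prove this lemma (it is stated as the ``dual version'' of \cite[Lemma 2.1]{ik}), so your proposal has to stand on its own. Your construction is the right one, but the final step --- ``a valid $\lambda$ exists in the narrow window where both conditions hold'' --- is exactly where the proof breaks, and shrinking $\eta$ does not repair it. Write $s:=x^*(x)-(1-\varepsilon)>0$ and take $\eta=0$ for simplicity. Your membership condition forces $(1-\lambda)(\varepsilon-s)<\delta$, i.e.\ $1-\lambda<\delta/(\varepsilon-s)$. Your containment condition requires $\|y_\lambda\|\geq\frac{1-\varepsilon(1-\lambda)}{1-\delta}$, and the only lower bound you have is $\|y_\lambda\|\geq x^*(y_\lambda)>1-(1-\lambda)(\varepsilon-s)$; plugging this in, the requirement becomes $(1-\lambda)\bigl(s(1-\delta)+\delta\varepsilon\bigr)\geq\delta$, i.e.\ $1-\lambda\geq\frac{\delta}{s(1-\delta)+\delta\varepsilon}$. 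The two bounds are compatible only when $s(1-\delta)+\delta\varepsilon>\varepsilon-s$, i.e.\ $s>\frac{\varepsilon(1-\delta)}{2-\delta}$. For points with small slack (say $\varepsilon=1/2$, $\delta=1/4$, $s=0.01$) the containment bound demands $1-\lambda\geq\delta/(s(1-\delta)+\delta\varepsilon)\approx 1.89>1$, so no $\lambda\in[0,1]$ works at all. The slack $s$ and the parameter $\eta$ cannot rescue this: the obstruction is that you estimate $\|y_\lambda\|$ from below by $x^*(y_\lambda)$ in one inequality and from above by $1$ in the other, and these two crude bounds are jointly too lossy.

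The standard way to close the argument is not a window of admissible $\lambda$ but an intermediate value argument on the norm itself. Keep your $v_t:=(1-t)x+t\psi$ and set $\phi(t):=(1-\delta)\|v_t\|-\bigl(1-\varepsilon(1-t)\bigr)$. Then $\phi(0)=\varepsilon-\delta>0$ and $\phi(1)=-\delta<0$, so by continuity there is $t_0\in(0,1)$ with $\phi(t_0)=0$. At this $t_0$ your containment computation goes through verbatim, because $\|v_{t_0}\|=\frac{1-\varepsilon(1-t_0)}{1-\delta}$ holds \emph{exactly}, not as an inequality to be verified. Membership then reduces to $x^*(v_{t_0})>(1-\delta)\|v_{t_0}\|=1-\varepsilon(1-t_0)$, which unwinds to $\eta<\frac{(1-t_0)s}{t_0}$; since $\phi(t)\leq\varepsilon(1-t)-\delta$ forces $1-t_0\geq\delta/\varepsilon$, it suffices to have fixed $\eta<\delta s/\varepsilon$ at the outset (note this is not the threshold $s(\varepsilon-\delta)$ you propose). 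With this modification --- and it is a genuinely different idea, not bookkeeping --- your outline becomes a correct proof; as written, it is not one.
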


\section{Diametral diameter two property and stability results}

We shall start by giving the following

\begin{definition}\label{defiDD2P}
Let $X$ be a Banach space.

We will say that $X$ has the diametral  diameter two property
(DD2P) if given $W$ a non-empty relatively weakly open subset of
$B_X$, $x\in W\cap S_X$ and $\varepsilon\in\mathbb R^+$ there
exists $y\in W$ such that
\begin{equation}\label{ecuaDD2P}
\Vert x-y\Vert>2-\varepsilon.
\end{equation}
If $X$ is a dual Banach space we will say that $X$ has the
weak-star diametral  diameter two property ($w^*$-DD2P) if given
$W$ a non-empty relatively weakly-star open subset of $B_X$, $x\in
W\cap S_X$ and $\varepsilon\in\mathbb R^+$ there exists $y\in W$
satisfying (\ref{ecuaDD2P}).
\end{definition}

From \cite[Lemma 2.3]{shi} we get that each Banach space enjoying
to have Daugavet property satisfies DD2P. However, there are
Banach spaces with the DD2P which do not enjoy to have the
Daugavet property.

\begin{example}\label{ejeDD2Pnodauga}

Let $X$ be the renorming of $C([0,1])$ given in \cite{ahntt}
satisfying that $X$ is MLUR, has the DLD2P and $X$ fails SD2P. Then $X$ fails the Daugavet property. However, $X$ has the DD2P
because $X$ has the DLD2P, applying  the well known Choquet lemma \cite[Lemma 3.40]{rusos}. 
\end{example}

It is known that a Banach space $X$ has the D2P if, and only if,
$X^{**}$ has the $w^*$-D2P. However, this fact is far from being
true for the DD2P. Indeed, applying the weak-star lower semicontinuity of a bidual norm is easy to get the following

\begin{proposition}\label{DD2Pbidual}

Let $X$ be a Banach space.
If $X^{**}$ has the $w^*$-DD2P, then $X$ has the DD2P.

\end{proposition}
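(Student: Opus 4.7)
The plan is to lift the situation into $X^{**}$, apply the $w^*$-DD2P there, and then push a witness back down to $X$ using Goldstine's theorem together with the $w^*$-lower semicontinuity of the bidual norm.

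First, let $W$ be a nonempty relatively weakly open subset of $B_X$, fix $x\in W\cap S_X$ and $\varepsilon>0$. By shrinking $W$ if necessary, I would assume $W$ is a basic weak neighborhood of $x$, i.e.
$$W=\{z\in B_X:\ |f_i(z)-f_i(x)|<\delta,\ i=1,\dots,n\}$$
for some $f_1,\dots,f_n\in X^*$ and $\delta>0$. Define the analogous $w^*$-open set in the bidual,
$$\widetilde{W}:=\{z^{**}\in B_{X^{**}}:\ |z^{**}(f_i)-f_i(x)|<\delta,\ i=1,\dots,n\}.$$
Since the defining functionals live in $X^*$, this set is $w^*$-open in $B_{X^{**}}$, it contains $x$ viewed as an element of $S_{X^{**}}$, and (trivially) $\widetilde{W}\cap B_X=W$.

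Next, because $X^{**}$ has the $w^*$-DD2P and $x\in\widetilde{W}\cap S_{X^{**}}$, I would apply Definition~\ref{defiDD2P} with parameter $\varepsilon/2$ to obtain $y^{**}\in\widetilde{W}$ with
$$\Vert x-y^{**}\Vert>2-\varepsilon/2.$$
Now Goldstine's theorem provides a net $(y_\alpha)\subseteq B_X$ with $y_\alpha\to y^{**}$ in the $w^*(X^{**},X^*)$ topology. Since $\widetilde{W}$ is $w^*$-open and contains $y^{**}$, eventually $y_\alpha\in\widetilde{W}\cap B_X=W$. Moreover $x-y_\alpha\to x-y^{**}$ in the same $w^*$-topology, so the $w^*$-lower semicontinuity of the bidual norm yields
$$\Vert x-y^{**}\Vert\leq\liminf_\alpha \Vert x-y_\alpha\Vert.$$
Consequently $\Vert x-y_\alpha\Vert>2-\varepsilon$ eventually, and any such $y_\alpha$ is the desired point of $W$.

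The only thing that requires a little care is the direction of the semicontinuity inequality: the bidual norm is $w^*$-\emph{lower} semicontinuous, not continuous, but this is precisely the direction one wants, since the inequality gives a lower bound on $\Vert x-y_\alpha\Vert$ in terms of $\Vert x-y^{**}\Vert$. The other mildly delicate point is the identification $\widetilde{W}\cap B_X=W$, which rests on the fact that the functionals defining $W$ sit in $X^*$ (and hence belong to the predual used for the $w^*$-topology of $X^{**}$); choosing a basic neighborhood of $x$ is what makes this automatic. Apart from these bookkeeping points, the argument is routine.
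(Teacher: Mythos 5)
Your argument is correct and is exactly the route the paper has in mind: the authors omit the proof, remarking only that it follows from the weak-star lower semicontinuity of the bidual norm, which is precisely the tool you use (together with Goldstine's theorem and the identification $\widetilde{W}\cap B_X=W$). Nothing further to add.
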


\begin{remark}\label{remarkDD2Pbidual}

The converse of Proposition \ref{DD2Pbidual} is not true. Indeed
consider $X:=\mathcal C(K)$, for an infinite compact Hausdorff and perfect topological space $K$. Now $X$ has the DD2P as being a
Daugavet space. However, $B_{X^*}$ has denting points, so $X^*$
fails the DLD2P and, consequently, $X^{**}$ fails the $w^*$-DLD2P
\cite[Theorem 3.6]{ahntt}.

\end{remark}

Now we shall provide several characterizations of the DD2P. First
of all, we shall show a useful characterization of the DD2P in
terms of weakly convergent nets which will be used in order to
prove the stability of the DD2P by $\ell_p$ sums.

\begin{proposition}\label{caraDD2Predes}

Let $X$ be a Banach space. The following assertions are
equivalent:

\begin{enumerate}
\item $X$ has DD2P. \item For each $x\in S_X$ there exists a net
$\{x_s\}\subset B_X$ which converges weakly to $x$ and such that
$$\{\Vert x-x_s\Vert\}\rightarrow 2.$$
\end{enumerate}

\end{proposition}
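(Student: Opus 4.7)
The plan is to prove both implications directly from the definitions, using that the relatively weak neighborhoods of a point $x\in B_X$ form a basis for the weak topology on $B_X$.

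For $(1)\Rightarrow(2)$, fix $x\in S_X$. I would index a net by the directed set $\Lambda$ consisting of pairs $s=(W,\varepsilon)$, where $W$ is a relatively weakly open neighborhood of $x$ in $B_X$ and $\varepsilon>0$, with the partial order $(W_1,\varepsilon_1)\le (W_2,\varepsilon_2)$ iff $W_2\subseteq W_1$ and $\varepsilon_2\le \varepsilon_1$. For each such $s$, the DD2P applied to $W$, the point $x\in W\cap S_X$ and $\varepsilon$ produces $x_s\in W\subseteq B_X$ with $\Vert x-x_s\Vert>2-\varepsilon$. Since every relatively weak neighborhood $W_0$ of $x$ in $B_X$ is eventually dominated by the index $s=(W_0,1)$, the net $\{x_s\}$ is eventually inside $W_0$, hence $x_s\xrightarrow{w}x$. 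On the other hand, $2-\varepsilon<\Vert x-x_s\Vert\le\Vert x\Vert+\Vert x_s\Vert\le 2$, and this forces $\Vert x-x_s\Vert\to 2$.

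For $(2)\Rightarrow(1)$, let $W$ be a non-empty relatively weakly open subset of $B_X$, $x\in W\cap S_X$ and $\varepsilon>0$. By hypothesis, pick a net $\{x_s\}\subset B_X$ with $x_s\xrightarrow{w}x$ and $\Vert x-x_s\Vert\to 2$. Since $W$ is a weak neighborhood of $x$ in $B_X$, there exists $s_0$ such that $x_s\in W$ for all $s\ge s_0$. Similarly, there exists $s_1$ with $\Vert x-x_s\Vert>2-\varepsilon$ for all $s\ge s_1$. Taking $s$ above both $s_0$ and $s_1$ yields a point $y:=x_s\in W$ satisfying $\Vert x-y\Vert>2-\varepsilon$, establishing the DD2P.

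No step here looks like a genuine obstacle: the argument is essentially a translation between the two formulations, and the only minor point of care is to set up the directed set in $(1)\Rightarrow(2)$ so that the resulting net both witnesses weak convergence to $x$ and records the quantitative diametrality $\Vert x-x_s\Vert>2-\varepsilon$ simultaneously.
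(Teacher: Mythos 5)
Your proof is correct and follows essentially the same route as the paper: the net indexed by pairs (weak neighborhood, $\varepsilon$) with the reverse-inclusion/decreasing order for $(1)\Rightarrow(2)$, and the straightforward extraction of a suitable index from the two convergences for $(2)\Rightarrow(1)$, are exactly the arguments given there.
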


\begin{proof}
(1)$\Rightarrow $(2). Pick $\mathcal U$ a neighborhood system of
$x$ in the weak topology relative to $B_X$. Now, for each $U\in
\mathcal U$ and every $\varepsilon\in\mathbb R^+$, choose
$x_{(U,\varepsilon)}\in U$ such that
$$\Vert x-x_{(U,\varepsilon)}\Vert\geq 2-\varepsilon.$$
Such $x_{(U,\varepsilon)}$ exists because $X$ has the DD2P. Now,
considering in $\mathcal U\times \mathbb R^+$ the partial order
given by the reverse inclusion in $\mathcal U$ and the inverse
natural order in $\mathbb R$ we conclude that
$\{x_{(U,\varepsilon)}\}_{(U,\varepsilon)\in\mathcal U\times
\mathbb R^+}\rightarrow x$ in the weak topology of $B_X$. It is
also clear that $\{\Vert
x-x_{(U,\varepsilon)}\Vert\}_{(U,\varepsilon)\in\mathcal U\times
\mathbb R^+}\rightarrow 2$.

(2)$\Rightarrow $(1). Pick $W$ a non-empty relatively weakly open
subset of $B_X$,  $x\in W\cap S_X$ and $\varepsilon\in\mathbb R^+$
and let us prove that there exists $y\in W$ such that $\Vert
x-y\Vert>2-\varepsilon$. By assumption there exists $\{x_s\}$ a
net in $B_X$ such that
$$\Vert x-x_s\Vert\rightarrow 2,$$
and
$$\{x_s\}\stackrel{w}{\rightarrow}x.$$
From both convergences then there exists $s$ such that $x_s\in W$
and $\Vert x-x_s\Vert>2-\varepsilon$. Now (1) follows choosing
$y:=x_s$.
\end{proof}

Now, for dual Banach spaces we have the following characterization
of the $w^*$-DD2P, as the above one.

\begin{corollary}\label{caraw*DD2Predes}
Let $X$ be a dual Banach space. The following assertions are
equivalent:

\begin{enumerate}
\item $X$ has $w^*$-DD2P. \item For each $x\in S_X$ there exists a
net $\{x_s\}$ in $B_X$ which converges to $x$ in the weak-star
topology such that
$$\{\Vert x-x_s\Vert\}\rightarrow 2.$$
\end{enumerate}

\end{corollary}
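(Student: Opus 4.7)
The plan is to mirror the proof of Proposition \ref{caraDD2Predes} almost verbatim, replacing the weak topology by the weak-star topology throughout. The only structural facts used in that argument are that the topology in question is a linear (Hausdorff) topology on $X$ in which the relevant open sets used to define the property (here, $w^*$-open subsets of $B_X$) form exactly the neighborhood bases we need to index a net by. Both features hold for the weak-star topology on a dual Banach space, so no new ideas are required.

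For the implication (1)$\Rightarrow$(2), I would fix $x\in S_X$, let $\mathcal U$ denote a neighborhood base of $x$ in the relative $w^*$-topology of $B_X$, and direct $\mathcal U\times\mathbb R^+$ by reverse inclusion in the first coordinate and the reverse order in the second. For each $(U,\varepsilon)$, the set $U$ is a $w^*$-open subset of $B_X$ containing $x\in S_X$, so by the $w^*$-DD2P there exists $x_{(U,\varepsilon)}\in U$ with $\|x-x_{(U,\varepsilon)}\|\geq 2-\varepsilon$. Since every $w^*$-neighborhood of $x$ in $B_X$ eventually contains the $U$-coordinate and $\varepsilon$ can be made arbitrarily small along the directed set, the net $\{x_{(U,\varepsilon)}\}$ converges $w^*$ to $x$ and $\|x-x_{(U,\varepsilon)}\|\to 2$.

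For (2)$\Rightarrow$(1), I would take a non-empty $w^*$-open set $W\subseteq B_X$, a point $x\in W\cap S_X$ and $\varepsilon>0$, and apply the hypothesis to get a net $\{x_s\}\subset B_X$ with $x_s\xrightarrow{w^*}x$ and $\|x-x_s\|\to 2$. Because $W$ is a $w^*$-neighborhood of $x$ in $B_X$, the first convergence forces $x_s\in W$ eventually, and the second forces $\|x-x_s\|>2-\varepsilon$ eventually; any index $s$ in the intersection of these two eventual sets produces the required $y:=x_s\in W$.

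There is no real obstacle; the only point worth double-checking is that the $w^*$-DD2P applies to $U$ in the first implication, which is fine since $U$ is (by definition of the relative topology) a non-empty relatively $w^*$-open subset of $B_X$ and contains $x\in S_X$. Consequently the statement is a direct corollary of Proposition \ref{caraDD2Predes} with the obvious substitution of topologies.
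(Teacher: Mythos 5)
Your proof is correct and is precisely the argument the paper intends: the result is stated as a corollary because the proof of Proposition \ref{caraDD2Predes} transfers verbatim upon replacing the weak topology by the weak-star topology, which is exactly what you carry out. No issues.
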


\begin{remark}\label{caradaugaredes}

In view of Proposition \ref{caraDD2Predes}, Daugavet property can
also be easily  characterized in terms of weakly convergent nets.
Indeed it is straighforward to prove from \cite[Lemma 2.3]{shi}
that a Banach space $X$ has the Daugavet property if, and only if,
given $x,y\in S_X$ there exists $\{y_s\}$ a net in $B_X$ weakly
convergent to $y$ such that
$$\{\Vert x-y_s\Vert\}\rightarrow 2.$$
\end{remark}

In \cite{ik} it is proved a characterization of DLD2P in terms of
the behavior of rank-one projections in a Banach space. It turns
out to be also true that DD2P can be characterized regarding the
behaviour of the rank-one projections. In fact, we have the
following characterization of the DD2P.

\begin{proposition}
Let $X$ be a Banach space. The following assertions are
equivalent:

\begin{enumerate}
\item $X$ has the DD2P.

\item For each $x_1^*,\ldots, x_n^*\in S_{X^*}$ and $x\in X$ such
that $x_i^*(x)\neq 0$, if we define
$$p_i:=x_i^*\otimes \frac{x}{x_i^*(x)}\ \forall i\in\{1,\ldots, n\}$$
one has that, for each $\varepsilon\in\mathbb R^+$, there exists
$y\in B_X$ such that
$$\Vert y-p_i(y)\Vert>2-\varepsilon\ \forall i\in\{1,\ldots, n\}$$
and
$$\frac{x_i^*(y)}{x_i^*(x)}\geq 0\ \forall i\in\{1,\ldots, n\}$$

\item Given $S:=S(B_X,x,\delta)$ a weak-star slice of $B_{X^*}$
and $x_1^*,\ldots, x_n^*\in S\cap S_{X^*}$ there exist $y^*\in S$
and $y\in S_{X}$ such that
$$(x_i^*-y^*)(y)>2-\delta\ \forall i\in\{1,\ldots, n\}.$$
\end{enumerate}
\end{proposition}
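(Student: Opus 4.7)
The plan is to establish $(1)\Leftrightarrow(2)$ and $(1)\Leftrightarrow(3)$ separately, using Lemma~\ref{lemakadets} to pass between weak neighborhoods of $x$ and slices defined by near-norming functionals. The computational core is the identity $\|y-p_i(y)\|=\|y-t_ix\|$, where $t_i:=x_i^*(y)/x_i^*(x)$; this turns (2) into a DD2P-type statement governed by how close $t_i$ is to $1$, equivalently how close $x_i^*(y)$ is to $x_i^*(x)$.

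For $(1)\Rightarrow(2)$, I first reduce to $\|x\|=1$ by scaling and to $x_i^*(x)>0$ by replacing $x_i^*$ with $-x_i^*$ where necessary (neither operation changes $p_i$). Then I apply DD2P to the weak neighborhood $W=\{z\in B_X:|x_i^*(z-x)|<\delta\text{ for all }i\}$ to obtain $y\in W$ with $\|y-x\|>2-\eta$. The constraint $|x_i^*(y)-x_i^*(x)|<\delta$ combined with $x_i^*(x)>0$ yields $t_i\geq 0$ and $|t_i-1|<\delta/x_i^*(x)$, so
\[
\|y-p_i(y)\|\geq \|y-x\|-|t_i-1|\,\|x\|>2-\eta-\delta/\min_i x_i^*(x),
\]
and a suitably small choice of $\eta,\delta$ gives (2).

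The implication $(2)\Rightarrow(1)$ is where I expect the main obstacle. Given a weakly open $W\ni x\in S_X$ and $\varepsilon>0$, take a basic subneighborhood of $W$ defined by finitely many $\phi_j\in X^*$ and, after a small perturbation ensuring $\phi_j(x)\neq 0$, use Lemma~\ref{lemakadets} to replace each $\phi_j$ by a near-norming $\xi_j^*\in S_{X^*}$ (with $\xi_j^*(x)>1-\delta'$) whose slice $S(B_X,\xi_j^*,\delta')$ is contained in the corresponding slice of $W$. Applying (2) to $\{\xi_j^*\}$ and $x$ with small parameter $\varepsilon_1$ yields $y\in B_X$ with $t_j>1-\varepsilon_1$ and $\xi_j^*(y)\geq 0$, hence $|\xi_j^*(y-x)|<\max(\varepsilon_1,\delta'/(1-\delta'))$ and $\|y-x\|>2-\varepsilon_1-\delta'/(1-\delta')$. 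The delicate step is that $y$ only approximately lies in the $\xi_j^*$-slice of the intended size, losing an extra $\varepsilon_1$; to compensate, one picks in Lemma~\ref{lemakadets} a slice size strictly smaller than the corresponding slice of $W$ (with margin to absorb $\varepsilon_1$) and uses the fact that $\xi_j^*$ may be taken norm-close to $\phi_j$ in the Kadets construction.

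Finally, $(1)\Leftrightarrow(3)$ follows a weak/weak-$*$ slice duality analogous to the classical equivalence for the D2P. Assuming (1), given $S=S(B_{X^*},x,\delta)$ and $x_i^*\in S\cap S_{X^*}$, set $\mu=\max_i(1-x_i^*(x))<\delta$ and pick $\delta'\in(\mu,\delta)$; DD2P applied to the weakly open $\bigcap_i S(B_X,x_i^*,\delta')$ produces $y$ with $\|y-x\|>2-\eta$, and Hahn-Banach supplies $y^*\in S_{X^*}$ with $y^*(x-y)=\|x-y\|$, so $y^*(x)>1-\eta$ and $y^*(y)<-1+\eta$; for $\eta<\delta-\delta'$ we have $y^*\in S$ and $(x_i^*-y^*)(y)>(1-\delta')-(-1+\eta)>2-\delta$. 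Conversely, given (3) and a weakly open $W\ni x\in S_X$, reduce via Lemma~\ref{lemakadets} to a basic subneighborhood $V=\bigcap_i S(B_X,x_i^*,\delta)\subseteq W$ with $x_i^*(x)>1-\delta$, and apply (3) to $S(B_{X^*},x,\delta)$ and these $x_i^*$: the resulting $y\in S_X$, $y^*\in S$ satisfy $x_i^*(y)>1-\delta$ (so $y\in V$) and $y^*(x-y)>2-2\delta$, whence $\|x-y\|>2-\varepsilon$ for $\delta<\varepsilon/2$.
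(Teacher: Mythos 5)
Your proposal is correct and follows essentially the same route as the paper: the same reduction $\Vert y-p_i(y)\Vert=\Vert y-t_ix\Vert$ with $t_i=x_i^*(y)/x_i^*(x)$, the same use of Lemma~\ref{lemakadets} to pass between basic weak neighborhoods and near-norming slices, and the same Hahn--Banach/slice-duality argument for (3); your treatment of the strict inequality in $(1)\Rightarrow(3)$ via $\delta'\in(\mu,\delta)$ is in fact slightly cleaner than the paper's appeal to ``a perturbation argument''. One aside to drop: in $(2)\Rightarrow(1)$ you neither need, nor does Lemma~\ref{lemakadets} provide, that $\xi_j^*$ be norm-close to $\phi_j$ --- the loss of $\varepsilon_1$ is absorbed simply by choosing $\varepsilon_1$ after the $\xi_j^*$ are fixed, so that $(1-\varepsilon_1)\xi_j^*(x)>1-\delta'$, which is exactly what the paper does with its parameter $\eta$.
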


\begin{proof}
(1)$\Rightarrow$(2).

Consider $x_1^*,\ldots, x_n^*\in S_X$, $x\in X$ and
$p_i:=x^*_i\otimes \frac{x}{x_i^*(x)}$ for each $i\in\{1,\ldots,
n\}$.

Consider $\varepsilon>0$ such that $\varepsilon<2$. Note that
$$\frac{x}{\Vert x\Vert}\in W:=\left\{y\in B_X\ : \left\vert \frac{x_i^*(y)}{x_i^*(x)}-\frac{1}{\Vert x\Vert} \right\vert\Vert x\Vert<\frac{\varepsilon}{2} \right\},$$
where $W$ is a relatively weakly open subset of $B_X$. Moreover
$\frac{x}{\Vert x\Vert}\in S_X$. As $X$ has the DD2P we can assure 
the existence of an element $y\in W$ such that $\left\Vert
y-\frac{x}{\Vert x\Vert}\right\Vert>2-\frac{\varepsilon}{2}$.

Now, on the one hand, as $y\in W$, given $i\in\{1,\ldots, n\}$,
one has
$$\left\vert \frac{x_i^*(y)}{x_i^*(x)}-\frac{1}{\Vert x\Vert} \right\vert\Vert x\Vert <\frac{\varepsilon}{2}\Rightarrow  \frac{x_i^*(y)}{x_i^*(x)}>\frac{1}{\Vert x\Vert}-\frac{\varepsilon}{2\Vert x\Vert}=\frac{1-\frac{\varepsilon}{2}}{\Vert x\Vert}\geq 0.$$

On the other hand, given $i\in\{1,\ldots, n\}$, it follows
$$\Vert y-p_i(y)\Vert\geq \left\Vert y-\frac{x}{\Vert x\Vert}\right\Vert-\left\Vert\frac{x}{\Vert x\Vert}-p_i(y)\right\Vert>2-\frac{\varepsilon}{2}-\left\Vert \frac{x}{\Vert x\Vert}-\frac{x_i^*(y)}{x_i^*(x)}x\right\Vert>2-\varepsilon $$
since $y\in W$. So (2) follows.

(2)$\Rightarrow$(1).

Consider $W:=\bigcap\limits_{i=1}^n S(B_X,y_i^*,\varepsilon_i)$ a
non-empty relatively weakly open subset of $B_X$ and pick $x\in
W\cap S_X$. In order to prove that $X$ has the DD2P choose
$0<\varepsilon<\min\limits_{1\leq i\leq n}\varepsilon_i$. By Lemma
\ref{lemakadets} we can find, for each $i\in\{1,\ldots, n\}$, a
functional $x_i^*\in S_{X^*}$ such that
$$x\in S(B_X,x_i^*,\varepsilon)\subseteq S(B_X,y_i^*,\varepsilon_i)\ \forall i\in\{1,\ldots, n\},$$
and so $x\in \bigcap\limits_{i=1}^n
S(B_X,x_i^*,\varepsilon)\subseteq W$. Consider $\eta\in\mathbb
R^+$ small enough to satisfy $x_i^*(x)(1-\eta)>1-\varepsilon$ for
each $i\in\{1,\ldots, n\}$. For each $i\in\{1,\ldots, n\}$ define
$$p_i:=x_i^*\otimes \frac{x}{x_i^*(x)}.$$
From the hypothesis we can find $y\in B_X$ such that
$$\Vert y-p_i(y)\Vert>2-\eta\ \forall i\in\{1,\ldots, n\}$$
and
$$\frac{x_i^*(y)}{x_i^*(x)}\geq 0.$$
Now, on the one hand, one has
$$1-\eta<\Vert p_i(y)\Vert=\left\Vert \frac{x_i^*(y)}{x_i^*(x)}x\right\Vert=\frac{x_i^*(y)}{x_i^*(x)}\ \forall i\in\{1,\ldots, n\}.$$
So $x_i^*(y)>(1-\eta)x_i^*(x)>1-\varepsilon$ for each
$i\in\{1,\ldots, n\}$ and, consequently, $y\in
\bigcap\limits_{i=1}^n S(B_X,x_i^*,\varepsilon)\subseteq W$.
Moreover, chosen $i\in\{1,\ldots, n\}$, it follows
$$\Vert y-x\Vert\geq \Vert y-p_i(y)\Vert-\Vert p_i(y)-x\Vert>2-\eta-\left\vert \frac{x_i^*(y)}{x_i^*(x)}-1 \right\vert=2-\eta-\vert x_i^*(y)-x_i^*(x)\vert x_i^*(x)$$
$$>2-\eta-\varepsilon.$$
As $0<\varepsilon<\min\limits_{1\leq i\leq n}\varepsilon_i$ was
arbitrary we conclude the desired result.

(1)$\Rightarrow$(3).  Let $S$ and $x_1^*,\ldots, x_n^*$ be as in
the hypothesis and pick $0<\eta<\delta$. Now given
$i\in\{1,\ldots, n\}$ one has
$$x_i^*\in S\Leftrightarrow x_i^*(x)>1-\delta\Leftrightarrow x\in S(B_{X},x_i^*,\delta).$$
So $x\in \bigcap\limits_{i=1}^n S(B_{X},x_i^*,\delta)\cap S_X$. As
$X$ has the DD2P then there exists $y\in \bigcap\limits_{i=1}^n
S(B_{X},x_i^*,\delta)\cap S_X$ such that
$$\Vert x-y\Vert>2-\eta\Rightarrow \ \exists\ y^*\in S_{X^*}\ /\ y^*(x)-y^*(y)>2-\eta\Rightarrow \left\{\begin{array}{cc}
y^*(x)>1-\eta\\
y^*(y)<-1+\eta
\end{array} \right.  .$$
So $y^*(x)>1-\delta$ and thus $y^*\in S$. In addition, given
$i\in\{1,\ldots, n\}$, it follows
$$ (x_i^*-y^*)(y)=x_i^*(y)-y^*(y)>1-\delta+1-\eta=2-\delta-\eta.$$
From the arbitrariness of $0<\eta<\delta$ we get the desired
result by a perturbation argument, if necessary.

(3)$\Rightarrow$(1). Let $W:=\bigcap\limits_{i=1}^n
S(B_{X},y_i^*,\varepsilon_i)$ be a non-empty relatively weakly
open subset of $B_{X}$ and consider $x\in W\cap S_{X}$. Pick
$0<\delta<\min\limits_{1\leq i\leq n} \varepsilon_i$. From Lemma
\ref{lemakadets} we can find, for each $i\in\{1,\ldots, n\}$, an
element $x_i^*\in S_X$ such that
$$x\in S(B_{X},x_i^*,\delta)\subseteq S(B_{X},y_i^*,\varepsilon_i)$$
holds for each $i\in\{1,\ldots, n\}$. Now $x_1^*,\ldots, x_n^*\in
S(B_{X^*},x,\delta)$. From assumptions we can find $y^*\in
S(B_{X^*},x,\delta)$ and $y\in S_{X}$ such that
$$(x_i^*-y^*)(y)>2-\delta$$
holds for each $i\in\{1,\ldots, n\}$. Now, on the one hand
$$x_i^*(y)>1-\delta\Rightarrow y\in\bigcap\limits_{i=1}^n S(B_{X},x_i^*,\delta)\subseteq W.$$
Moreover, as $y^*\in S(B_{X^*},x,\delta)$, it follows
$$\Vert x-y\Vert\geq y^*(x)-y^*(y)>1-\delta+1-\delta=2(1-\delta).$$
From the arbitrariness of $0<\delta<\min\limits_{1\leq i\leq
n}\varepsilon_i$ we have that $X$ has the DD2P, as
desired.
\end{proof}

\begin{remark}
Note that given $p_1,\ldots, p_n$ rank one projections as in above Proposition one has
$$\Vert I-p_i\Vert\geq 2$$
whenever $X$ enjoys to have the DLD2P. However, if $X$ also
satisfies the DD2P these projections can be ``normed'' by a common
point of the space.
\end{remark}

A dual version of above Proposition is the following

\begin{proposition}\label{caraDD2Pdual}

Let $X$ be a Banach space. The following assertions are
equivalent:

\begin{enumerate}
\item $X^*$ has the $w^*$-DD2P.

\item For each $x_1,\ldots, x_n\in S_{X}$ and $x^*\in X^*$ such
that $x^*(x_i)\neq 0$, if we define
$$p_i:=\frac{x^*}{x^*(x_i)}\otimes x_i \ \forall i\in\{1,\ldots, n\}$$
one has that, for each $\varepsilon\in\mathbb R^+$, there exists
$y^*\in B_{X^*}$ such that
$$\Vert y^*-p_i(y^*)\Vert>2-\varepsilon\ \forall i\in\{1,\ldots, n\}$$
and
$$\frac{y^*(x_i)}{x^*(x_i)}\geq 0\ \forall i\in\{1,\ldots, n\}$$

\item Given $S:=S(B_X,x^*,\delta)$ a slice of $B_{X}$ and
$x_1,\ldots, x_n\in S\cap S_{X}$ there exist $y\in S$ and $y^*\in
S_{X^*}$ such that
$$y^*(x_i-y)>2-\delta\ \forall i\in\{1,\ldots, n\}.$$
\end{enumerate}
\end{proposition}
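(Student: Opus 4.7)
The plan is to mirror the proof of the previous proposition almost verbatim, performing the duality $X\leftrightarrow X^{*}$, weak $\leftrightarrow$ weak-star, slice $\leftrightarrow$ $w^{*}$-slice, and using Lemma \ref{lemadualkadets} in place of Lemma \ref{lemakadets}. The structural reason this works is that basic $w^{*}$-slices of $B_{X^{*}}$ are indexed by elements of $X$, so the roles of \emph{vector} and \emph{functional} exchange, and the rank-one projection is now read as $p_{i}(g)=g(x_{i})\frac{x^{*}}{x^{*}(x_{i})}$, which makes sense on $X^{*}$ because $x_{i}\in X\subseteq X^{**}$. In (3) one may, after normalization, assume $\Vert x^{*}\Vert=1$, in keeping with the implicit hypothesis in the primal version.

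For (1)$\Rightarrow$(2), given $x_{1},\ldots,x_{n}\in S_{X}$, $x^{*}\in X^{*}$ with $x^{*}(x_{i})\neq 0$, and $\varepsilon>0$, I would consider the relatively $w^{*}$-open neighborhood of $x^{*}/\Vert x^{*}\Vert \in S_{X^{*}}$
$$W:=\left\{g\in B_{X^{*}}\ :\ \left\vert\frac{g(x_{i})}{x^{*}(x_{i})}-\frac{1}{\Vert x^{*}\Vert}\right\vert\Vert x^{*}\Vert<\frac{\varepsilon}{2}\ \forall i\right\},$$
apply the $w^{*}$-DD2P to obtain $y^{*}\in W$ with $\Vert y^{*}-x^{*}/\Vert x^{*}\Vert\Vert>2-\varepsilon/2$, read the sign condition off from $g\in W$, and conclude $\Vert y^{*}-p_{i}(y^{*})\Vert>2-\varepsilon$ by the triangle inequality pivoting through $x^{*}/\Vert x^{*}\Vert$. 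For (2)$\Rightarrow$(1), I start with a $w^{*}$-open set $W=\bigcap_{i=1}^{n}S(B_{X^{*}},y_{i},\varepsilon_{i})\ni x^{*}\in S_{X^{*}}$, use Lemma \ref{lemadualkadets} to replace each $y_{i}$ by some $x_{i}\in S_{X}$ with $x^{*}\in S(B_{X^{*}},x_{i},\varepsilon)\subseteq S(B_{X^{*}},y_{i},\varepsilon_{i})$, and then apply (2) with $\eta>0$ small; the sign condition together with $\Vert p_{i}(y^{*})\Vert>1-\eta$ forces $y^{*}(x_{i})>1-\varepsilon$ (so $y^{*}\in W$), and the estimate $\Vert y^{*}-x^{*}\Vert$ close to $2$ follows exactly as in the primal case.

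The equivalence (1)$\Leftrightarrow$(3) hinges on the identity $x_{i}\in S(B_{X},x^{*},\delta)\Leftrightarrow x^{*}\in S(B_{X^{*}},x_{i},\delta)$, valid when $\Vert x_{i}\Vert=1=\Vert x^{*}\Vert$. For (1)$\Rightarrow$(3) I apply the $w^{*}$-DD2P at $x^{*}\in\bigcap_{i}S(B_{X^{*}},x_{i},\delta)\cap S_{X^{*}}$ to get $y^{*}$ in the same intersection with $\Vert x^{*}-y^{*}\Vert>2-\eta$, then pick $y\in S_{X}$ almost norming $x^{*}-y^{*}$; the inequalities $x^{*}(y)>1-\eta$ (placing $y$ in $S$), $y^{*}(y)<-1+\eta$, and $y^{*}(x_{i})>1-\delta$ combine to yield $y^{*}(x_{i}-y)>2-\delta-\eta$. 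For (3)$\Rightarrow$(1) I again use Lemma \ref{lemadualkadets} to reduce an arbitrary $w^{*}$-open set to an intersection of $w^{*}$-slices and apply (3), checking that $y^{*}\in W$ and $\Vert x^{*}-y^{*}\Vert>2(1-\delta)$. The main obstacle I foresee is the perturbation step needed in (1)$\Rightarrow$(3) to upgrade $>2-\delta-\eta$ to the strict $>2-\delta$; I would handle it by first shrinking $\delta$ to $\delta':=\max_{i}(1-x^{*}(x_{i}))<\delta$, running the argument with $\delta'$ in place of $\delta$, and choosing $\eta$ small enough that $\delta'+\eta<\delta$. All other computations are direct transcriptions of the primal proof with bookkeeping adjustments between $X$ and $X^{*}$.
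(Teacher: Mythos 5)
Your proposal is correct and is exactly the argument the paper intends: the paper gives no proof of this proposition, presenting it merely as the dual of the preceding one, and your transcription (swapping $X$ and $X^{*}$, weak and weak-star, Lemma \ref{lemakadets} and Lemma \ref{lemadualkadets}, and reading $p_{i}$ as $y^{*}\mapsto y^{*}(x_{i})\frac{x^{*}}{x^{*}(x_{i})}$) is precisely that dualization. The only cosmetic slip is in (1)$\Rightarrow$(3): with $\delta':=\max_{i}(1-x^{*}(x_{i}))$ one only gets $x^{*}(x_{i})\geq 1-\delta'$ non-strictly, so $x^{*}$ need not lie in $\bigcap_{i}S(B_{X^{*}},x_{i},\delta')$; running the argument with any $\delta''\in(\delta',\delta)$ and $\eta<\delta-\delta''$ repairs this without affecting anything else.
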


It is known that DLD2P is stable under taking $\ell_p$-sums.
Indeed, given two Banach spaces $X$ and $Y$ and $1\leq p\leq
\infty$, the Banach space $X\oplus_p Y$ has the DLD2P if, and only
if, $X$ and $Y$ enjoy to have the DLD2P \cite[Theorem 3.2]{ik}.

Our aim is to establish the same result for the DD2P. We shall
begin with the stability result

\begin{theorem}\label{DD2Pestableprodu}
Let $X,Y$ be Banach spaces which satisfy the DD2P and let $1\leq
p\leq \infty$. Then $X\oplus_p Y$ enjoys to have the DD2P.
\end{theorem}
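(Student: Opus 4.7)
The plan is to apply the net characterization of the DD2P from Proposition \ref{caraDD2Predes}: a Banach space $Z$ has the DD2P if and only if every $z\in S_Z$ is the weak limit of a net $\{z_s\}\subset B_Z$ with $\Vert z-z_s\Vert\to 2$. Since $(X\oplus_p Y)^*=X^*\oplus_q Y^*$ for the conjugate exponent $q$, weak convergence in $X\oplus_p Y$ amounts to coordinate-wise weak convergence. Thus my task reduces, for each $(x,y)\in S_{X\oplus_p Y}$, to producing such a net with limit $(x,y)$.

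If one coordinate vanishes, say $y=0$, then $\Vert x\Vert=1$ (in both the $p$-sum and $\infty$-sum norms). Apply the characterization to $X$ to get $\{u_s\}\subset B_X$ with $u_s\to x$ weakly and $\Vert x-u_s\Vert\to 2$, and set $z_s:=(u_s,0)$. Clearly $z_s\in B_{X\oplus_p Y}$, the net converges weakly to $(x,0)$, and
\[
\Vert (x,0)-z_s\Vert = \Vert x-u_s\Vert \to 2
\]
in either $\ell_p$-sum norm. The case $x=0$ is symmetric.

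Suppose now $x\neq 0$ and $y\neq 0$; set $\tilde x:=x/\Vert x\Vert\in S_X$ and $\tilde y:=y/\Vert y\Vert\in S_Y$. By Proposition \ref{caraDD2Predes} applied to $X$ and $Y$ separately, produce nets $\{u_s\}_{s\in S}\subset B_X$ and $\{v_t\}_{t\in T}\subset B_Y$ with $u_s\to \tilde x$, $v_t\to \tilde y$ weakly, and $\Vert \tilde x-u_s\Vert,\Vert \tilde y-v_t\Vert\to 2$. Consider the product-indexed net
\[
z_{(s,t)} := \bigl(\Vert x\Vert\, u_s,\, \Vert y\Vert\, v_t\bigr).
\]
For $p<\infty$ one has $\Vert z_{(s,t)}\Vert_p \leq (\Vert x\Vert^p + \Vert y\Vert^p)^{1/p} = 1$, and for $p=\infty$ one has $\Vert z_{(s,t)}\Vert_\infty\le \max(\Vert x\Vert,\Vert y\Vert)=1$, so $z_{(s,t)}\in B_{X\oplus_p Y}$. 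Coordinate-wise weak convergence to $(x,y)$ along the product directed set is immediate. Finally
\[
\Vert (x,y)-z_{(s,t)}\Vert_p = \bigl(\Vert x\Vert^p\Vert \tilde x-u_s\Vert^p + \Vert y\Vert^p\Vert \tilde y-v_t\Vert^p\bigr)^{1/p} \to 2,
\]
by continuity of the scalar $\ell_p^2$-norm, with the analogous $\max$ computation for $p=\infty$. Invoking Proposition \ref{caraDD2Predes} again now yields the DD2P for $X\oplus_p Y$.

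The real content sits in the net characterization of Proposition \ref{caraDD2Predes}; once that is in place the $\ell_p$-sum argument is a direct coordinate-wise construction. I do not foresee a substantive obstacle beyond carefully separating the degenerate case where one coordinate is zero and verifying joint convergence along the product directed set.
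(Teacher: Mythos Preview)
Your proof is correct and follows essentially the same route as the paper: both arguments invoke the net characterization of Proposition \ref{caraDD2Predes} and build the required net in $X\oplus_p Y$ coordinate-wise from nets supplied by the DD2P of $X$ and $Y$. The only cosmetic difference is the case split---the paper separates $p=\infty$ (moving only one coordinate) from $p<\infty$ (using the product net), whereas you separate the degenerate case of a vanishing coordinate and then use the product-net construction uniformly for all $p$; neither organization affects the substance.
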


\begin{proof}

Define $Z:=X\oplus_p Y$ , pick $(x_0,y_0)\in  S_Z$ and let us
apply Proposition \ref{caraDD2Predes}.

On the one hand, if $p=\infty$, then either $\Vert x_0\Vert=1$ or
$\Vert y_0\Vert=1$. Assume, with no loss of generality, that
$\Vert x_0\Vert=1$. As $X$ has the DD2P then there exists
$\{x_s\}$ a net in $B_X$ such that
$$\{x_s\}\rightarrow x_0$$
in the weak topology of $X$ and
$$\Vert x-x_s\Vert\rightarrow 2.$$
Then we have that
$$\{(x_s,y_0)\}\rightarrow (x_0,y_0)$$
in the weak topology of $B_Z$ (note that, from the definition of
the norm on $Z$ we have that each term of the above net  belongs to
$B_Z$). In addition, given $s$ one has
$$2\geq \Vert (x_0,y_0)-(x_s,y_0)\Vert_\infty=\max\{\Vert x-x_s\Vert,\Vert y_0\Vert\}\geq \Vert x-x_s\Vert.$$
As $\{\Vert x-x_s\Vert\}\rightarrow 2$ we conclude that $\{\Vert
(x_0,y_0)-(x_s,y_s)\Vert\}\rightarrow 2$.

On the other hand, assume $p<\infty$. As $(x_0,y_0)\in S_Z$ we
have that
$$\left(\Vert x_0\Vert^p+\Vert y_0\Vert^p\right)^\frac{1}{p}=1.$$
Now $x_0$ is an element of $\Vert x_0\Vert S_X$. As $X$ has the
DD2P then by Proposition \ref{caraDD2Predes} there exists
$\{x_s\}_{s\in S}$ a net in $\Vert x_0\Vert B_X$ such that
$$\{x_s\}\rightarrow x_0$$
in the weak topology of $X$ and
$$\{\Vert x_0-x_s\Vert\}\rightarrow 2\Vert x_0\Vert.$$
In addition, as $Y$ also has the DD2P, then there exists a net
$\{y_t\}_{t\in T}$ in $\Vert y_0\Vert B_Y$ such that
$$\{y_t\}_{t\in T}\rightarrow y_0$$
in the weak topology of $Y$ and such that
$$\{\Vert y_0-y_t\Vert\}_{t\in T}\rightarrow 2\Vert y_0\Vert.$$
Now we have $\{(x_s,y_t)\}_{(s,t)\in S\times T}\rightarrow
(x_0,y_0)$ in the weak topology of $Z$. Moreover, given $s\in S,
t\in T$ one has
$$\Vert (x_s,y_t)\Vert_p=\left(\Vert x_s\Vert^p+\Vert y_t\Vert^p\right)^\frac{1}{p}\leq \left(\Vert x_0\Vert^p+\Vert y_0\Vert^p\right)^\frac{1}{p}=1,$$
so $(x_s,y_t)\in B_Z$ for each $s\in S, t\in T$. Finally, given
$s\in S, t\in T$ it follows
$$\Vert (x_0,y_0)-(x_s,y_t)\Vert_p=\left(\Vert x_0-x_s\Vert^p+\Vert y_0-y_t\Vert^p\right)^\frac{1}{p}\rightarrow \left((2\Vert x_0\Vert)^p+(2\Vert y_0\Vert)^p\right)^\frac{1}{p}=2.$$
\end{proof}

Now let us prove the converse of the above  result.

\begin{proposition}

Let $X,Y$ be Banach space and define $Z:=X\oplus_p Y$ for $1\leq
p\leq \infty$. If $X$ fails to have DD2P so does $Z$.

\end{proposition}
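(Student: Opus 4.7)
The plan is to argue by contradiction, leveraging the net characterization of DD2P given in Proposition \ref{caraDD2Predes}. Since $X$ fails DD2P, that proposition delivers some $x_0\in S_X$ for which no net $\{x_s\}\subset B_X$ converging weakly to $x_0$ can satisfy $\Vert x_0-x_s\Vert\to 2$. The natural candidate in $Z:=X\oplus_p Y$ is $z_0:=(x_0,0)\in S_Z$, and I will show that $z_0$ obstructs DD2P in $Z$.

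Suppose $Z$ has DD2P. Then Proposition \ref{caraDD2Predes} yields a net $\{(x_s,y_s)\}\subset B_Z$ with $(x_s,y_s)\to z_0$ weakly in $Z$ and $\Vert z_0-(x_s,y_s)\Vert_p\to 2$. Composing with the coordinate projections (bounded linear, hence weak-to-weak continuous) gives $x_s\to x_0$ weakly in $X$ and $y_s\to 0$ weakly in $Y$. What remains is to show $\Vert x_0-x_s\Vert\to 2$, because $\{x_s\}\subset B_X$ would then be a net witnessing DD2P at $x_0$, contradicting its choice.

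The main delicate step is the norm control. Since $(x_s,y_s)\in B_Z$ forces $\Vert x_s\Vert\leq 1$ for every $1\leq p\leq\infty$, and weak lower semicontinuity gives $\liminf\Vert x_s\Vert\geq\Vert x_0\Vert=1$, I obtain $\Vert x_s\Vert\to 1$. For $1\leq p<\infty$ this forces $\Vert y_s\Vert^p\leq 1-\Vert x_s\Vert^p\to 0$, so $\Vert y_s\Vert\to 0$; substituting into $\Vert z_0-(x_s,y_s)\Vert_p^p=\Vert x_0-x_s\Vert^p+\Vert y_s\Vert^p$ then yields $\Vert x_0-x_s\Vert\to 2$. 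For $p=\infty$, the bound $\Vert y_s\Vert\leq 1<2$ combined with $\max\{\Vert x_0-x_s\Vert,\Vert y_s\Vert\}\to 2$ forces $\Vert x_0-x_s\Vert\to 2$ as well. This norm-convergence step is the heart of the argument; everything else is bookkeeping via Proposition \ref{caraDD2Predes}.
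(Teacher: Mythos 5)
Your proof is correct, but it takes a genuinely different route from the paper. You prove the contrapositive via the net characterization of Proposition \ref{caraDD2Predes}: assuming $Z$ has the DD2P, you extract a net $\{(x_s,y_s)\}\subset B_Z$ witnessing diametrality at $(x_0,0)$, and the key step --- weak lower semicontinuity of the norm forcing $\Vert x_s\Vert\to 1$, hence $\Vert y_s\Vert\to 0$ when $p<\infty$ and $\Vert y_s\Vert\le 1$ when $p=\infty$ --- squeezes out $\Vert x_0-x_s\Vert\to 2$, contradicting the choice of $x_0$. The paper instead argues directly: it builds an explicit relatively weakly open subset $W$ of $B_Z$ containing $(x_0,0)$ (namely $\{(x,y): x\in U\cap B_X\}$ for $p=\infty$, and additionally cutting with $\Vert x\Vert>1-\delta$ for $p<\infty$, which is weakly open by the same lower semicontinuity you invoke) and checks $\sup_{(x,y)\in W}\Vert(x_0,0)-(x,y)\Vert<2$ by an $\varepsilon$--$\delta$ estimate on the $Y$-coordinate. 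Your version is shorter and treats both cases more uniformly, at the cost of being non-quantitative; the paper's version exhibits the obstructing weak neighborhood explicitly. Both hinge on the same two facts: the coordinate projections are weak-to-weak continuous, and the norm is weakly lower semicontinuous.
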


\begin{proof}
As $X$ fails the DD2P then there exists $U$ a non-empty relatively
weakly open subset of $B_X$, $x_0\in U\cap S_X$ and
$\varepsilon_0\in\mathbb R^+$ such that
$$\Vert x_0-y\Vert\leq 2-\varepsilon_0\ \forall y\in U.$$
Now we shall argue by cases:

\begin{enumerate}
\item If $p=\infty$ define the weak open subset of $B_Z$ given by
$$W:=\{(x,y)\in B_Z\ :\ x\in U\cap B_X\},$$
and pick $(x_0,0)\in W$. Then for each $(x,y)\in W$ one has
$$\Vert (x_0,0)-(x,y)\Vert=\max\{\Vert x_0-x\Vert,\Vert y\Vert\}\leq \max\{2-\varepsilon_0,1\}<2,$$
as $x\in U\cap B_X$.

\item If $p<\infty$, given $\varepsilon\in\mathbb R^+$, there
exists $\delta>0$ such that
\begin{equation}\label{convenormap}
\left. \begin{array}{c}
1-\delta<\vert r\vert\leq 1, \vert s\vert\leq 1,\\
(\vert r\vert^p+\vert s\vert^p)^\frac{1}{p}\leq 1
\end{array} \right\}\Rightarrow \vert s\vert^p<\varepsilon.
\end{equation}
Define
$$W:=\{(x,y)\in B_Z\ :\ x\in U\cap B_X\ \mbox{and}\ \Vert x\Vert>1-\delta\},$$
which is a weakly open subset of $B_Z$ from the lower weakly
semicontinuity of the norm on $X$. Consider $(x_0,0)\in W$. Now,
given $(x,y)\in W$ we have from (\ref{convenormap}) that $\Vert
y\Vert^p\leq \varepsilon$. In addition, as $x\in U\cap B_X$ we
conclude $\Vert x-x_0\Vert\leq 2-\varepsilon_0$. Hence
$$\Vert (x_0,0)-(x,y)\Vert=\left(\Vert x-x_0\Vert^p+\Vert y\Vert^p \right)^\frac{1}{p}\leq \left((2-\varepsilon_0)^p+\varepsilon
\right)^\frac{1}{p}.$$
So, taking $\varepsilon$ small enough, we conclude that
$\sup\limits_{(x,y)\in W} \Vert (x_0,0)-(x,y)\Vert<2$, so we are
done.
\end{enumerate}
\end{proof}

Even though Example \ref{ejeDD2Pnodauga} shows that Daugavet
property and DD2P are different, above results provide us more
examples of such Banach spaces. Indeed, given $1<p<\infty$,
$Z:=X\oplus_p Y$ has the DD2P and fails to have Daugavet property
(actually, $Z$ fails to have the strong diameter two property
\cite[Theorem 3.2]{abl}) whenever $X,Y$ are Banach spaces with the
DD2P (in particular, Daugavet spaces).

Finally we shall study the following problem: when a subspace of a
Banach space having the DD2P inherits DD2P? In order to give a
partial answer, it has been recently proved in \cite{blr} that D2P
is hereditary to finite-codimensional subspaces. Bearing in mind
the ideas of the proof of that result, we can prove the
following

\begin{theorem}\label{pasoDD2P}
Let $X$ be a Banach space which satisfies the DD2P. If $Y$ is a closed subspace of $X$ such that $X/Y$ is
finite-dimensional then $Y$ has the DD2P.
\end{theorem}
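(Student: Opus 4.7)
The plan is to reduce to the codimension-one case by a simple induction on $\dim(X/Y)$---at each step pick an intermediate closed subspace of codimension one in $X$ (automatically closed since its image in $X/Y$ is finite-dimensional)---and then combine the net-characterization of the DD2P from Proposition~\ref{caraDD2Predes} with a Hahn--Banach correction trick.

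In the codimension-one case, write $Y = \ker f$ for some $f \in S_{X^*}$ and fix $e \in X$ with $f(e) = 1$. Given $y_0 \in S_Y \subseteq S_X$, I will invoke Proposition~\ref{caraDD2Predes} applied to $X$ to obtain a net $\{x_\alpha\} \subset B_X$ with $x_\alpha \xrightarrow{w} y_0$ and $\|y_0 - x_\alpha\| \to 2$. The correction $z_\alpha := x_\alpha - f(x_\alpha)\, e$ automatically lies in $Y$, and since $f(x_\alpha) \to f(y_0) = 0$ we get $z_\alpha \xrightarrow{w} y_0$; the triangle inequality then forces $\|y_0 - z_\alpha\| \to 2$ as well.

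The main technical step is the renormalization, which is where care is needed. The vectors $z_\alpha$ need not sit in $B_Y$, but $\|z_\alpha\| \leq 1 + |f(x_\alpha)|\|e\| \to 1$, so setting $\lambda_\alpha := \max\{1, \|z_\alpha\|\}$ and $w_\alpha := z_\alpha/\lambda_\alpha$ produces a net in $B_Y$ that is an $o(1)$-perturbation of $z_\alpha$ in norm. Both the weak convergence $w_\alpha \xrightarrow{w} y_0$ and the limit $\|y_0 - w_\alpha\| \to 2$ survive this rescaling. Since every functional in $Y^*$ extends to $X^*$ by Hahn--Banach, weak convergence of $w_\alpha$ to $y_0$ in $X$ coincides with weak convergence in $Y$, so a second application of Proposition~\ref{caraDD2Predes}---this time inside $Y$---delivers the DD2P of $Y$.

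The step I expect to require most care is precisely the renormalization: one must verify that dividing by $\lambda_\alpha$ preserves the maximal asymptotic distance $2$, not merely the weak convergence. Because $\lambda_\alpha \to 1$ and $|f(x_\alpha)| \to 0$, this amounts to a tidy $o(1)$-bookkeeping via $\|y_0 - w_\alpha\| \geq \|y_0 - z_\alpha\| - (1 - 1/\lambda_\alpha)\|z_\alpha\|$, and it is here that the codimension-one hypothesis is genuinely used---a single functional $f$ governs the whole correction. The general finite-codimensional case then follows from the inductive reduction described above.
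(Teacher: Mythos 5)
Your argument is correct, but it is a genuinely different route from the one in the paper. The paper attacks the general finite-codimensional case directly: it extends the defining functionals of a weakly open set $W\subseteq B_Y$ to $X$, uses that the quotient map $p:X\to X/Y$ is weak-to-weak open together with the finite-dimensionality of $X/Y$ to manufacture a weakly open subset $B=p^{-1}(V)\cap U\cap B_X$ whose image in the quotient has diameter $<\delta/16$, applies the DD2P of $X$ inside $B$, and then pushes the resulting point back into $B_Y$ at the cost of enlarging $W$ slightly --- which forces the final perturbation argument with $\widehat W$ and $\widehat W_\delta$. You instead reduce to codimension one by induction (legitimate: the intermediate subspace $Z=q^{-1}(V)$ is closed because every subspace of the finite-dimensional $X/Y$ is closed, and the codimension drops by one at each step) and then work entirely through the net characterization of Proposition \ref{caraDD2Predes}: the explicit correction $z_\alpha=x_\alpha-f(x_\alpha)e$ lands in $\ker f=Y$, the weak convergence $f(x_\alpha)\to f(y_0)=0$ makes it an $o(1)$ norm perturbation of $x_\alpha$, and your renormalization bookkeeping with $\lambda_\alpha=\max\{1,\|z_\alpha\|\}\to 1$ is sound; the identification of weak convergence in $Y$ with weak convergence in $X$ via Hahn--Banach closes the loop. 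What each approach buys: yours is shorter and cleaner because the net characterization absorbs all the $\varepsilon$-management of weakly open neighbourhoods that occupies the second half of the paper's proof; the paper's quotient-map argument, on the other hand, is the template that generalizes verbatim to the weak-star version (Corollary \ref{subespciosw*DD2P}) and, with strongly regular quotients replacing finite-dimensional ones, to the DSD2P in Theorem \ref{pasoDSD2P}, where no single correcting functional is available and your codimension-one reduction has no analogue.
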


\begin{proof}

Consider
$$W:=\{y\in Y\ :\ \vert y_i^*(y-y_0)\vert<\varepsilon_i\ \forall i\in\{1,\ldots, n\}\}, $$
for $n\in\mathbb N, \varepsilon_i\in\mathbb R^+, y_i^*\in Y^*$ for
each $i\in\{1,\ldots,n\}$ and $y_0\in Y$ such that
$$W\cap B_Y\neq \emptyset.$$
Pick $y\in W\cap S_Y$ and let us find, for each $\delta\in\mathbb
R^+$, a point  $z\in W\cap B_Y$ such that $\Vert
y-z\Vert>2-\delta$. To this aim pick an arbitrary
$\delta\in\mathbb R^+$.
Assume that $y_i^*\in X^*$ for each  $i\in\{1,\ldots, n\}$. Observe that there is no 
 loss of generality from the  Hahn-Banach  theorem.

Define
$$U:=\{x\in X\ :\ \vert y_i^*(x-y_0)\vert<\varepsilon_i\ \forall i\in\{1,\ldots, n\}\},$$
which is a weakly open set in $X$ such that $U\cap B_X\neq
\emptyset$.

Let $p:X\longrightarrow X/Y$ be the quotient map, which is a $w-w$
open map. Then  $p(U)$ is a weakly open set in  $X/Y$. In addition
$$\emptyset\neq p(U\cap B_X)\subseteq p(U)\cap p(B_X)\subseteq p(U)\cap B_{X/Y}.$$
Defining $A:=p(U)\cap B_{X/Y}$, then $A$ is a non-empty relatively
weakly open and convex subset of $B_{X/Y}$ which contains to zero.
Hence, as $X/Y$ is finite-dimensional, we can find a weakly open
set $V$ of $X/Y$, in fact a ball centered at $0$, such that
$V\subset A$ and that
\begin{equation}\label{diamabicocie}
diam(V\cap p(U)\cap B_{X/Y})=  diam(V)<\frac{\delta}{16}.
\end{equation}
As $V\subset A$ then $B:=p^{-1}(V)\cap U\cap B_X\neq \emptyset$.
Hence $B$ is a non-empty relatively weakly open subset of $B_X$.
Moreover $y\in p^{-1}(V)$ because $p(y)=0\in V$, so $y\in B\cap
S_X$. Using that $X$ satisfies the DD2P we can assure the
existence of  $v\in B$ such that
\begin{equation}\label{estimabiespad2p}
\Vert v-y\Vert>2-\frac{\delta}{16}.
\end{equation}
Note that  $v\in B$ implies $p(v)\in V= V\cap P(U)\cap B_{X/Y}$.
In view of  (\ref{diamabicocie}) it follows
$$\Vert p(v)\Vert\leq  diam(V\cap p(U)\cap B_{X/Y})<\frac{\delta}{16}.$$
Hence there exists $u\in Y$ such that $ \Vert
u-v\Vert<\frac{\delta}{16}$ and so $\Vert
u\Vert<1+\frac{\delta}{16}$. Letting  $z=\frac{u}{\Vert u\Vert}$, we
have that
$$\Vert v-z\Vert\leq \Vert u-v\Vert+\left\Vert u-\frac{u}{\Vert u\Vert}\right\Vert<\frac{\delta}{16}+\Vert u\Vert(\Vert u\Vert-1)<\frac{\delta}{16}+\left(1+\frac{\delta}{16}\right)\frac{\delta}{16}=$$
$$=\frac{\delta}{16}\left(2+\frac{\delta}{16}\right).$$
So
\begin{equation}\label{elemenposd2p}
\Vert v-z\Vert<\frac{\delta}{4}.
\end{equation}
Note that given $i\in\{1,\ldots, n\}$ and   bearing in mind
(\ref{elemenposd2p}) one has
$$\vert y_i^*(z-y_0)\vert\leq \vert y_i^*(z-v)\vert+\vert y_i^*(v-y_0)\vert\leq \Vert y_i^*\Vert\frac{\delta}{4}+\varepsilon_i,$$
using that $v\in U$. Thus, if we define
$$W_\delta:=\left\{y\in Y\ :\ \vert y_i^*(y-y_0)\vert<\varepsilon_i+\Vert y_i^*\Vert\frac{\delta}{4}\ \forall i\in\{1,\ldots, n\}\right\}$$
it follows that $v\in W_\delta\cap B_Y$. On the other hand, in
view of  (\ref{estimabiespad2p}) and (\ref{elemenposd2p}) we can
estimate
$$
\Vert y-z\Vert\geq \Vert y-v\Vert-\Vert v-z\Vert >
2-\frac{\delta}{16}-\frac{\delta}{4}>2-\delta.
$$
From here we can conclude the desired result. Indeed, for each
$i\in\{1,\ldots, n\}$ we can find
$\widehat{\varepsilon}_i\in\mathbb{R}^+$ and
$\delta_0\in\mathbb{R}^+$ such that
$$\widehat{\varepsilon}_i+\delta_0\Vert y_i^*\Vert<
\varepsilon_i\ \forall i\in\{1,\ldots, n\},$$
and that
$$y\in \widehat{W}:=\{z\in Y\ :\ \vert y_i^*(z-y_0)\vert<\widehat{\varepsilon}_i\ \forall i\in\{1,\ldots, n\}\}.$$
For $0<\delta<\delta_0$ one has
$$\widehat{W}_\delta:=\left\{y\in Y\ :\ \vert y_i^*(y-y_0)\vert<\widehat{\varepsilon}_i+\Vert y_i^*\Vert\frac{\delta}{4}\ \forall i\in\{1,\ldots, n\}\right\}\subseteq W.$$
The arbitrariness of $\delta$ in the above argument allow us to
conclude the desired result.
\end{proof}

As it is done in \cite{blr} for the $w^*$-D2P, we can conclude a
stability result for the $w^*$-DD2P.

\begin{corollary}\label{subespciosw*DD2P}

Let $X$ be a Banach space and let $Y\subseteq X$ a closed
subspace. If $X^*$ has the $w^*$-DD2P and $Y$ is finite-dimensional, then
$(X/Y)^*$ has the $w^*$-DD2P.

\end{corollary}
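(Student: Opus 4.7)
The plan is to identify $(X/Y)^{*}$ isometrically with the annihilator $Y^{\perp}\subseteq X^{*}$ via the adjoint of the quotient map $\pi\colon X\to X/Y$. Under this identification the $w^{*}$-topology of $(X/Y)^{*}$ corresponds precisely to the restriction of the $w^{*}$-topology of $X^{*}$ to $Y^{\perp}$, since for $h\in Y^{\perp}$ one has $h(x)=h(\pi(x))$ for every $x\in X$. Hence it suffices to verify the $w^{*}$-DD2P for $Y^{\perp}$ with its relative $w^{*}$-topology, taking advantage of the fact that $X^{*}/Y^{\perp}$ is isometrically isomorphic (via restriction) to the finite-dimensional space $Y^{*}$.

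Given a non-empty relatively $w^{*}$-open subset $W$ of $B_{Y^{\perp}}$, a point $f\in W\cap S_{Y^{\perp}}$ and $\delta>0$, I would write $W=\{h\in B_{Y^{\perp}}:|(h-f)(x_{i})|<\varepsilon_{i},\ i=1,\ldots,n\}$ for suitable $x_{i}\in X$ and $\varepsilon_{i}>0$. Fixing a basis $e_{1},\ldots,e_{k}$ of $Y$ and a parameter $\eta>0$ to be adjusted, I enlarge the defining system by the constraints $|h(e_{j})|<\eta$; as $f\in Y^{\perp}$ these hold at $f$, so
$$
U:=\bigl\{h\in B_{X^{*}}:|(h-f)(x_{i})|<\varepsilon_{i},\ |h(e_{j})|<\eta,\ i\le n,\ j\le k\bigr\}
$$
is a non-empty $w^{*}$-open subset of $B_{X^{*}}$ with $f\in U\cap S_{X^{*}}$. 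Applying the $w^{*}$-DD2P of $X^{*}$ provides $h\in U$ with $\|h-f\|>2-\delta/2$. The constraints on the basis force $\|h|_{Y}\|_{Y^{*}}\le C\eta$ for a constant $C$ depending only on $(e_{j})$, and the isometric identification $X^{*}/Y^{\perp}\cong Y^{*}$ translates this into $\mathrm{dist}(h,Y^{\perp})\le C\eta$, yielding some $g_{0}\in Y^{\perp}$ with $\|h-g_{0}\|<2C\eta$.

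A final renormalization $g:=g_{0}/\max(1,\|g_{0}\|)\in B_{Y^{\perp}}$ adds at most $2C\eta$ of error, so $\|h-g\|\le 4C\eta$ and hence $\|f-g\|>2-\delta/2-4C\eta$, while $|(g-f)(x_{i})|\le \varepsilon_{i}+4C\eta\|x_{i}\|$. Following the closing perturbation argument of Theorem \ref{pasoDD2P}, I would initially replace the radii $\varepsilon_{i}$ by tighter $\widehat{\varepsilon}_{i}<\varepsilon_{i}$ and pick $\eta,\delta$ small enough that both $\widehat{\varepsilon}_{i}+4C\eta\|x_{i}\|<\varepsilon_{i}$ and $\delta/2+4C\eta<\delta$ hold; this gives $g\in W$ with $\|f-g\|>2-\delta$, as desired. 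The main obstacle is precisely this coupling between the normalization error (which perturbs simultaneously the norm distance and the evaluation bounds) and the open constraints; this is a bookkeeping issue to be handled exactly as in the proof of Theorem \ref{pasoDD2P}.
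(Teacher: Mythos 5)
Your proof is correct and follows essentially the same route as the paper: identify $(X/Y)^{*}$ with the annihilator $Y^{\circ}\subseteq X^{*}$, extend the relatively $w^{*}$-open set to $B_{X^{*}}$, apply the $w^{*}$-DD2P of $X^{*}$, and use the finite-dimensionality of $X^{*}/Y^{\circ}\cong Y^{*}$ to push the resulting point back into $B_{Y^{\circ}}$ with small error, closing with the same perturbation bookkeeping as in Theorem~\ref{pasoDD2P}. The only (harmless) difference is that you control the quotient component by adding explicit constraints $|h(e_{j})|<\eta$ on a basis of $Y$, whereas the paper invokes the $w^{*}$-openness of the quotient map to find a small ball around zero in $Y^{*}$; these are equivalent here.
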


\begin{proof}
Consider $W$ a weakly-star open subset of $Y^\circ=(X/Y)^*$ such
that
$$W\cap B_{Y^\circ}\neq \emptyset,$$
and pick $z^*\in W\cap S_{Y^\circ}$. Now we can extend $W$ to a
weak-star open subset of $X^*$, say $U$, as it is done in Theorem
\ref{pasoDD2P} satisfying $z^*\in U\cap S_{X^*}$.

Let $p:X^*\longrightarrow X^*/Y^\circ$ be the quotient map, which
is a  $w^*-w^*$ open map. Then $p(U)$ is a weakly-star open set of
$X^*/Y^\circ$ which meets with $B_{X^*/Y^\circ}$.

If we define $A:=p(U)\cap B_{X^*/Y^\circ}$, then we have that $A$
is a relatively weak-star open and  convex subset of
$B_{X^*/Y^\circ}$ which contains to zero.

As $X^*/Y^\circ=Y^*$ is finite-dimensional, we can find $V$ a
weak-star open set of $X^*/Y^\circ$, in fact a ball centered at
zero, such that $V\subset A$ and whose diameter is as closed to
zero as desired.

From here, it is straightforward to check that  computations of
Theorem \ref{pasoDD2P} work and allow us to conclude that
$$\sup\limits_{x^*\in W\cap B_{Y^\circ}}\Vert z^*-x^*\Vert=2,$$
so $Y^\circ=(X/Y)^*$ has the $w^*$-DD2P as desired.
\end{proof}

As we have pointed out in the Introduction, the D2P is inherited
to almost isometric ideals from the whole space \cite[Proposition
3.2]{aln}. Now, following similar ideas, we get the following

\begin{proposition}

Let $X$ be a Banach space and let $Y\subseteq X$ a closed almost
isometric ideal. If $X$ has the DD2P, so does $Y$.

\end{proposition}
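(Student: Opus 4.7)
The plan is to adapt the argument of \cite[Proposition 3.2]{aln} for the D2P, using the ``transfer'' machinery provided by Theorem \ref{propiai}. Let $\varphi:Y^*\to X^*$ be the Hahn--Banach operator supplied by that theorem. Given a non-empty relatively weakly open set $W\subseteq B_Y$, $y\in W\cap S_Y$, and $\varepsilon>0$, I would first shrink $W$ (invoking Lemma \ref{lemakadets} finitely many times if necessary) to assume the explicit form $W=\bigcap_{i=1}^n\{v\in B_Y:|y_i^*(v-y)|<\alpha\}$ with $y_1^*,\ldots,y_n^*\in S_{Y^*}$ and $\alpha>0$.

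The first step is to lift this situation from $Y$ to $X$ via $\varphi$. Since $\varphi(y_i^*)|_Y=y_i^*$, the set
$$
U:=\bigcap_{i=1}^n\bigl\{u\in B_X:|\varphi(y_i^*)(u-y)|<\alpha/2\bigr\}
$$
is a non-empty relatively weakly open subset of $B_X$ with $y\in U\cap S_X$. Fix a small auxiliary $\eta>0$ (to be pinned down at the end). Applying the DD2P of $X$ to $U$ yields $x\in U$ with $\|y-x\|>2-\eta$.

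The second step is to pull $x$ back into $Y$. Applying Theorem \ref{propiai} with $E:=\mathrm{span}\{y,x\}$, $F:=\mathrm{span}\{y_1^*,\ldots,y_n^*\}$, and tolerance $\eta$ produces $T:E\to Y$ satisfying $T(y)=y$, $\|T(e)\|\leq(1+\eta)\|e\|$ on $E$, and $y_i^*(T(e))=\varphi(y_i^*)(e)$ for every $e\in E$ and every $i$. In particular $\|T(x)\|\leq 1+\eta$, so the normalisation $z:=T(x)/\max(1,\|T(x)\|)\in B_Y$ satisfies $\|z-T(x)\|\leq\eta$. A triangle-inequality computation then gives $|y_i^*(z-y)|\leq\eta+\alpha/2<\alpha$ whenever $\eta<\alpha/2$, so $z\in W$; and using $T(y-x)=y-T(x)$ together with the lower bound on $T$, we get
$$
\|y-z\|\geq\|T(y-x)\|-\|T(x)-z\|\geq \frac{\|y-x\|}{1+\eta}-\eta>\frac{2-\eta}{1+\eta}-\eta,
$$
which exceeds $2-\varepsilon$ once $\eta$ is chosen small enough depending on $\varepsilon$.

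The main obstacle is essentially bookkeeping: the error coming from Hahn--Banach-lifting the functionals must be compensated both in the membership of $z$ in $W$ and in the distance $\|y-z\|$, while the re-normalisation of $T(x)$ contributes a second, additive error of order $\eta$. A single parameter $\eta$ taken small enough with respect to $\alpha$ and $\varepsilon$ absorbs both contributions and finishes the argument.
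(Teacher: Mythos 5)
Your argument is correct and follows essentially the same route as the paper: the paper proves this by taking $n=1$ in the proof of Proposition \ref{aidsd2p}, which likewise lifts the weakly open set to $B_X$ via the Hahn--Banach operator $\varphi$ of Theorem \ref{propiai}, applies the diametral property of $X$ there, and pulls the resulting point back into $Y$ with the local almost-isometry $T$ followed by a renormalisation. The only differences (one-sided slices versus symmetric neighbourhoods, dividing by $1+\eta$ versus by $\max(1,\Vert T(x)\Vert)$) are cosmetic.
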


\begin{proof}
Take $n=1$ in the proof of Proposition \ref{aidsd2p}
\end{proof}

\section{Diametral strong diameter two property  and stability results}

Now we shall introduce the natural extension of the SD2P in the
same way the DD2P is defined.

\begin{definition}\label{defiDSD2P}

Let $X$ be a Banach space. We will say that $X$ has the diametral strong diameter two
property (DSD2P) if given $C$ a convex combination of non-empty
relatively weakly open subsets of $B_X$, $x\in C$ and
$\varepsilon\in\mathbb R^+$ then there exists $y\in C$ such that
\begin{equation}\label{ecuastrong+}
\Vert x-y\Vert>1+\Vert x\Vert-\varepsilon.
\end{equation}
If $X$ is a dual space, we will say that $X$ has the weak-star
diametral strong diameter two property ($w^*$-DSD2P) if given $C$
a convex combination of non-empty relatively weakly-star open subsets
of $B_X$, $x\in C$ and $\varepsilon\in\mathbb R^+$ then there
exists $y\in C$ satisfying (\ref{ecuastrong+}).
\end{definition}

\begin{remark}

On the one hand, note that the above definition extends the strong diameter two property
from the Bourgain lemma \cite{ggms}.

On the other hand, the condition 
(\ref{defidiametral}) is replaced with (\ref{ecuastrong+}) to get the implication DSD2P$\Rightarrow$SD2P. Indeed, consider $X$ the Banach space of Example \ref{ejeDD2Pnodauga} and $C:=\sum_{i=1}^n \lambda_i W_i$ a convex combination of non-empty relatively weakly open subsets of $B_X$. If $C\cap S_X\neq \emptyset$ then there exists $x:=\sum_{i=1}^n \lambda_i x_i\in C\cap S_X$. As $X$ is a strictly convex space we conclude that $x_1=x_2=\ldots=x_n$. Consequently  $x\in\bigcap\limits_{i=1}^n W_i\subseteq C$ and, as $X$ has the DD2P, we can find, for each $\varepsilon>0$, an element  $y\in\bigcap\limits_{i=1}^n W_i\subseteq X$ such that $\Vert y-x\Vert>2-\varepsilon$. However, $X$ fails to have the SD2P.
\end{remark}

As in the DD2P, the first example of Banach space with the DSD2P
comes from Daugavet spaces.

\begin{example}

Daugavet Banach spaces enjoy to have DSD2P.

\end{example}

\begin{proof}

Consider $X$ to be a Banach space enjoying to have the Daugavet
property. From the proof of \cite[Lemma 2.3]{shi}  it follows that
given $C$ a convex combination of non-empty relatively weakly open
subsets of $B_X$, $x\in S_X$ and $\varepsilon\in\mathbb R^+$ we
can find $y\in C$ such that
$$\Vert x+y\Vert>2-\varepsilon.$$
From here let us prove that $X$ enjoys to have the DSD2P. To this
aim pick $C:=\sum_{i=1}^n \lambda_i W_i$ a convex combination of
non-empty relatively weakly open subsets of $B_X$. Let $x\in C$
such that $x\neq 0$. From Daugavet property we can find $y\in
\sum_{i=1}^n \lambda_i (-W_i)$ such that
$$\left\Vert \frac{x}{\Vert x\Vert}+y\right\Vert>2-\varepsilon.$$
Now $-y\in C$. Moreover
$$\Vert x-(-y)\Vert\geq \left\Vert \frac{x}{\Vert x\Vert}+y\right\Vert -\left\Vert \frac{x}{\Vert x\Vert}-x\right\Vert>2-\varepsilon- \Vert x\Vert\left\vert \frac{1}{\Vert x\Vert}-1\right\vert=$$
$$2-\varepsilon- \vert 1-\Vert x\Vert\vert=2-\varepsilon-1+\Vert x\Vert=1+\Vert x\Vert-\varepsilon.$$
In order to conclude the proof assume that $0\in C$. As
$diam(C)=2$ (see the proof of \cite[Lemma 2.3]{blr3}) we can find
$x,y\in C$ such that
$$\left.\begin{array}{c}
\Vert x-y\Vert>2-\varepsilon\\
\Vert x\Vert\leq 1
\end{array}\right\}\Rightarrow \Vert y-0\Vert=\Vert y\Vert>1-\varepsilon=1+\Vert 0\Vert-\varepsilon.$$
From the arbitrariness of $C$ we conclude that $X$ has the DSD2P.

\end{proof}

Given a Banach space $X$, it is true that $X$ has the DSD2P
whenever $X^{**}$ has the $w^*$-DSD2P by a similar argument to the
one given in Proposition \ref{DD2Pbidual}. Again, the converse is
not true, because the example exhibited in Remark
\ref{remarkDD2Pbidual} also works for the DSD2P.

Moreover, DSD2P admits a characterization in terms of weakly
convergent nets as DD2P does. Indeed, we have the following

\begin{proposition}\label{caraDSD2Predes}

Let $X$ be a Banach space. The following assertions are
equivalent:

\begin{enumerate}
\item $X$ has the DSD2P. \item For each $x_1,\ldots, x_n\in B_X$
and each $\lambda_1,\ldots, \lambda_n\in\mathbb R^+$ such that
$\sum_{i=1}^n \lambda_i=1$ it follows that, for each
$i\in\{1,\ldots, n\}$, there exists $\{x_s^i\}_{s\in S}$ a net in
$B_X$ weakly convergent to $x_i$ such that
$$\left\{\left\Vert \sum_{i=1}^n \lambda_i(x_i-x_s^i) \right\Vert \right\}\rightarrow 1+\left\Vert \sum_{i=1}^n \lambda_i x_i\right\Vert.$$
\end{enumerate}

\end{proposition}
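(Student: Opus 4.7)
The plan is to mimic closely the structure of Proposition \ref{caraDD2Predes}, noting that the essential novelty here is that we must produce $n$ nets sharing a common index set so that the convex combination $\sum_{i=1}^n \lambda_i x_s^i$ makes sense as a net.

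For the implication (1)$\Rightarrow$(2), I would fix $x_1,\ldots, x_n\in B_X$ and positive weights $\lambda_i$ summing to one, and set $x:=\sum_{i=1}^n\lambda_i x_i$. For each $i$ let $\mathcal U_i$ be a neighbourhood base of $x_i$ in the weak topology relative to $B_X$, and direct the set $\Lambda:=\mathcal U_1\times\cdots\times\mathcal U_n\times\mathbb R^+$ by reverse inclusion on each $\mathcal U_i$ together with the reverse order on $\mathbb R^+$. Given $s=(U_1,\ldots,U_n,\varepsilon)\in\Lambda$, the set $C_s:=\sum_{i=1}^n\lambda_i U_i$ is a convex combination of non-empty relatively weakly open subsets of $B_X$ containing $x$. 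Applying the DSD2P to $C_s$, $x$ and $\varepsilon$, we obtain $y_s=\sum_{i=1}^n\lambda_i x_s^i\in C_s$ (with $x_s^i\in U_i\subseteq B_X$) satisfying $\Vert x-y_s\Vert>1+\Vert x\Vert-\varepsilon$. Then $\{x_s^i\}_{s\in\Lambda}$ converges weakly to $x_i$ for each $i$, and since $\bigl\Vert\sum_{i=1}^n\lambda_i(x_i-x_s^i)\bigr\Vert=\Vert x-y_s\Vert\le 1+\Vert x\Vert$ by the triangle inequality (and $\Vert y_s\Vert\le 1$), the lower bound forces $\bigl\Vert\sum_{i=1}^n\lambda_i(x_i-x_s^i)\bigr\Vert\to 1+\Vert x\Vert$.

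For the implication (2)$\Rightarrow$(1), I would start with $C:=\sum_{i=1}^n\lambda_i W_i$ a convex combination of non-empty relatively weakly open subsets of $B_X$, a point $x\in C$, and $\varepsilon>0$. Writing $x=\sum_{i=1}^n\lambda_i x_i$ with $x_i\in W_i$, hypothesis (2) yields nets $\{x_s^i\}_{s\in S}$ in $B_X$ on a common directed set $S$ with $x_s^i\stackrel{w}{\to}x_i$ and $\bigl\Vert\sum_{i=1}^n\lambda_i(x_i-x_s^i)\bigr\Vert\to 1+\Vert x\Vert$. Since each $W_i$ is a weak neighbourhood of $x_i$ in $B_X$, eventually $x_s^i\in W_i$ for every $i$ simultaneously (this is where the common index set matters), and eventually the norm exceeds $1+\Vert x\Vert-\varepsilon$. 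Choosing such an index $s$ and setting $y:=\sum_{i=1}^n\lambda_i x_s^i\in C$ finishes the proof.

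The only real obstacle is the cosmetic issue of having all $n$ nets indexed by the same directed set; this is handled in (1)$\Rightarrow$(2) by indexing over the product of neighbourhood bases. Once this is set up, both directions are essentially routine reformulations, and the upper bound $\Vert x-y_s\Vert\le 1+\Vert x\Vert$ from the triangle inequality ensures that the limit in (2) is exactly $1+\Vert x\Vert$ rather than merely bounded below by it.
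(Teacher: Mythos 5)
Your proof is correct and follows essentially the same route as the paper: the paper also builds the $n$ nets on a common directed index set (it uses $\mathcal U\times\mathbb R^+$ with $\mathcal U$ a neighbourhood base of $0$ and the translates $(x_i+U)\cap B_X$, where you use the product of neighbourhood bases of the $x_i$'s, an immaterial difference), applies the DSD2P to the resulting convex combination, and gets the exact limit from the triangle-inequality upper bound $\Vert x-y_s\Vert\le 1+\Vert x\Vert$. The converse direction, which the paper only sketches by reference to Proposition \ref{caraDD2Predes}, is handled by you in the intended way via the simultaneous eventuality afforded by the common index set.
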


\begin{proof}

(1)$\Rightarrow$(2). Pick $\mathcal U$ a system of neighborhoods of $0$. Now, for each $U\in \mathcal U$ and $\varepsilon\in\mathbb
R^+$, pick $x_{U,\varepsilon}^i$ for each $i\in\{1,\ldots, n\}$
such that
$$x_{U,\varepsilon}^i\in (x_i+U)\cap B_X$$
and
$$\left\Vert \sum_{i=1}^n \lambda_i (x_i-x_{U,\varepsilon}^i)\right\Vert>1+\left\Vert \sum_{i=1}^n \lambda_i x_i\right\Vert-\varepsilon,$$
which can be done because $X$ has the DSD2P.

Now it is quite clear that, given $i\in\{1,\ldots, n\}$, then
$$\{x_{U,\varepsilon}\}_{(U,\varepsilon)\in\mathcal U\times \mathbb R^+}\rightarrow x_i$$
in the weak topology of $X$. Moreover, it is clear that
$$\left\{\left\Vert \sum_{i=1}^n \lambda_i(x_i-x_s^i) \right\Vert \right\}\rightarrow 1+\left\Vert \sum_{i=1}^n \lambda_i x_i\right\Vert$$
from triangle inequality.

(2)$\Rightarrow$(1). Is similar to Proposition
\ref{caraDD2Predes}.

\end{proof}

Now we can establish a dual version for the result above.

\begin{proposition}\label{caraDSD2Pdualredes}

Let $X$ be a dual Banach space. The following assertions are
equivalent:

\begin{enumerate}
\item $X$ has the $w^*$-DSD2P. \item For each $x_1,\ldots, x_n\in
B_X$ and each $\lambda_1,\ldots, \lambda_n\in\mathbb R^+$ such
that $\sum_{i=1}^n \lambda_i=1$ it follows that, for each
$i\in\{1,\ldots, n\}$, there exists $\{x_s^i\}_{s\in S}$ a net in
$B_X$ convergent to $x_i$ in the weak-star topology of $X$ such
that
$$\left\{\left\Vert \sum_{i=1}^n \lambda_i(x_i-x_s^i) \right\Vert \right\}\rightarrow 1+\left\Vert \sum_{i=1}^n \lambda_i x_i\right\Vert.$$
\end{enumerate}
\end{proposition}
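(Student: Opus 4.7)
The plan is to mirror verbatim the proof of Proposition \ref{caraDSD2Predes}, simply replacing the weak topology by the weak-star topology and the DSD2P by the $w^*$-DSD2P. The only structural prerequisite is that $X$ is a dual space, so that the $w^*$-topology is a well-defined vector topology on $X$; in particular, translates of $w^*$-open sets remain $w^*$-open.

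For (1)$\Rightarrow$(2), I would fix $x_1,\dots,x_n\in B_X$ and positive scalars $\lambda_1,\dots,\lambda_n$ with $\sum_{i=1}^n\lambda_i=1$, and let $\mathcal{U}$ be the system of $w^*$-neighborhoods of $0$ in $X$. For each pair $(U,\varepsilon)\in\mathcal{U}\times\mathbb{R}^+$, the set $W_i:=(x_i+U)\cap B_X$ is a non-empty relatively $w^*$-open subset of $B_X$ containing $x_i$, so $C_{U,\varepsilon}:=\sum_{i=1}^n\lambda_i W_i$ is a convex combination of non-empty relatively $w^*$-open subsets of $B_X$ containing the point $x:=\sum_{i=1}^n\lambda_i x_i$. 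Applying the $w^*$-DSD2P to $C_{U,\varepsilon}$, $x$, and $\varepsilon$, I obtain points $x^i_{U,\varepsilon}\in W_i$ such that
\[
\left\Vert\sum_{i=1}^n\lambda_i(x_i-x^i_{U,\varepsilon})\right\Vert>1+\left\Vert\sum_{i=1}^n\lambda_i x_i\right\Vert-\varepsilon.
\]
Directing $\mathcal{U}\times\mathbb{R}^+$ by reverse inclusion on the first coordinate and the reversed order on the second, each net $\{x^i_{U,\varepsilon}\}$ converges $w^*$ to $x_i$ by construction, and the norm estimate above, combined with the triangle inequality upper bound $1+\Vert\sum_i\lambda_i x_i\Vert$, forces convergence of the norms to $1+\Vert\sum_i\lambda_i x_i\Vert$.

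For (2)$\Rightarrow$(1), I would take $C=\sum_{i=1}^n\lambda_i W_i$ a convex combination of non-empty relatively $w^*$-open subsets of $B_X$ and pick $x=\sum_{i=1}^n\lambda_i x_i\in C$ with $x_i\in W_i$ and an arbitrary $\varepsilon>0$. The hypothesis yields, for each $i$, a net $\{x_s^i\}_{s\in S}$ in $B_X$ that is $w^*$-convergent to $x_i$ and satisfies
\[
\left\{\left\Vert\sum_{i=1}^n\lambda_i(x_i-x_s^i)\right\Vert\right\}\longrightarrow 1+\left\Vert\sum_{i=1}^n\lambda_i x_i\right\Vert.
\]
Since each $W_i$ is relatively $w^*$-open in $B_X$, the $w^*$-convergence $x_s^i\to x_i$ ensures that eventually $x_s^i\in W_i$ for all $i$ simultaneously, so $y_s:=\sum_{i=1}^n\lambda_i x_s^i\in C$ eventually. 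Choosing $s$ large enough, $\Vert x-y_s\Vert>1+\Vert x\Vert-\varepsilon$, yielding the $w^*$-DSD2P.

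I do not expect a genuine obstacle: the entire argument is a cosmetic adaptation. The only point requiring a moment's care is the observation that $(x_i+U)\cap B_X$ really is a relatively $w^*$-open subset of $B_X$, which holds because the $w^*$-topology on $X$ is a translation-invariant vector topology.
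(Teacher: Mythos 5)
Your proposal is correct and is exactly the argument the paper intends: the paper gives no separate proof of Proposition~\ref{caraDSD2Pdualredes}, presenting it as the ``dual version'' of Proposition~\ref{caraDSD2Predes}, whose proof you reproduce verbatim with the weak topology replaced by the weak-star topology. The two points you flag (translation-invariance of the $w^*$-topology and the triangle-inequality upper bound $1+\left\Vert\sum_{i=1}^n\lambda_i x_i\right\Vert$ forcing norm convergence) are handled correctly.
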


As we have checked, DLD2P and DD2P have strong links with the rank
one projections. This fact turns out to be also true for the DSD2P
when we consider finite-rank projections.

\begin{proposition}
Let $X$ be a Banach space. Assume that $X$ has the DSD2P. Then for each $p:=\sum_{i=1}^n x_i^*\otimes x_i$ projection we
have
$$\Vert I-p\Vert\geq 1+\left\Vert \sum_{i=1}^n \frac{1}{n} x_i\right\Vert.$$
\end{proposition}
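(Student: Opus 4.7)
The plan is to invoke the net characterization of the DSD2P (Proposition \ref{caraDSD2Predes}) with the very points $x_1, \ldots, x_n$ appearing in the decomposition of $p$ (which we may assume lie in $B_X$) and equal weights $\lambda_i = 1/n$. This produces, for each $i$, a net $\{x^i_s\}_{s \in S} \subset B_X$ with $x^i_s \stackrel{w}{\to} x_i$ such that
$$\left\|\frac{1}{n}\sum_{i=1}^n (x_i - x^i_s)\right\| \longrightarrow 1 + \left\|\frac{1}{n}\sum_{i=1}^n x_i\right\|.$$
Setting $y_s := \frac{1}{n}\sum_i x^i_s \in B_X$ and $y := \frac{1}{n}\sum_i x_i$, one has $y_s \stackrel{w}{\to} y$ and $\|y_s - y\| \to 1 + \|y\|$. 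These $y_s$ will be the candidates used to bound $\|I - p\|$ from below.

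The key step is to upgrade this to norm information about $p(y_s)$. Since $p$ is a projection, $p^2 = p$ forces $x_j^*(x_i) = \delta_{ij}$, so the scalars $x_j^*(x^i_s)$ tend to $\delta_{ij}$ for each pair $i, j$. Hence $p(x^i_s) = \sum_j x_j^*(x^i_s)\, x_j$ is a fixed finite sum in $\operatorname{span}\{x_1, \ldots, x_n\}$ whose coefficients converge to those of $x_i = \sum_j \delta_{ij} x_j$; by boundedness of the $\|x_j\|$, this yields $p(x^i_s) \to x_i$ in norm. Averaging over $i$ gives $\|p(y_s) - y\| \to 0$.

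With this in hand, the reverse triangle inequality
$$\|(I - p)(y_s)\| \geq \|y_s - y\| - \|p(y_s) - y\|$$
combined with $y_s \in B_X$ yields
$$\|I - p\| \geq \liminf_s \|(I - p)(y_s)\| \geq 1 + \|y\|,$$
which is the desired estimate. The main technical point I expect to be the heart of the argument is precisely the upgrade from the \emph{weak} convergence $x^i_s \to x_i$ supplied by Proposition \ref{caraDSD2Predes} to the \emph{norm} convergence of $p(y_s) \to y$; this is exactly where the finite-rank nature of $p$ and the projection identity $p^2 = p$ do the essential work, because without norm convergence of $p(y_s)$ the reverse triangle inequality cannot separate the two contributions.
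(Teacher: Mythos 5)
Your argument is correct and is in substance the paper's own proof: the paper tests $I-p$ on the average $\frac{1}{n}\sum_i y_i$ of points drawn from weakly open sets defined by the biorthogonal functionals $x_j^*$, which is exactly what your appeal to the net characterization of Proposition \ref{caraDSD2Predes} packages, and the final reverse-triangle-inequality split of $\|(I-p)(y_s)\|$ into the diametral term and the term $\|p(y_s)-y\|\to 0$ is the same estimate. Both your proof and the paper's implicitly normalize the $x_i$ (the paper actually establishes the bound with $x_i/\Vert x_i\Vert$ in place of $x_i$) and both rely on the biorthogonality $x_j^*(x_i)=\delta_{ij}$ forced by $p^2=p$; you at least make the latter explicit.
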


\begin{proof}

Pick $p:=\sum_{i=1}^n x_i^*\otimes x_i$ a finite rank projection
and let $\varepsilon\in\mathbb R^+$.

 Then
$$\sum_{i=1}^n \frac{1}{n} \frac{x_i}{\Vert x_i\Vert}\in \sum_{i=1}^n \frac{1}{n} \left\{y\in B_X\ :\ \begin{array}{c}
\left\vert x_i^*(y)-\frac{1}{\Vert x_i\Vert}\right\vert \Vert x_i\Vert<\frac{\varepsilon}{4}\\
\vert x_j(y)\vert\Vert x_j\Vert<\frac{\varepsilon}{4}\ \ \forall
j\neq i.
\end{array} \right\}$$
As $X$ has the DSD2P then, for each $i\in\{1,\ldots, n\}$, there
exists $y_i\in \left\{y\in B_X\ :\ \begin{array}{c}
\left\vert x_i^*(y)-\frac{1}{\Vert x_i\Vert}\right\vert \Vert x_i\Vert<\frac{\varepsilon}{4}\\
\vert x_j(y)\vert\Vert x_j\Vert<\frac{\varepsilon}{4}\ \ \forall
j\neq i.
\end{array} \right\}$ such that
$$\left\Vert \sum_{i=1}^n \frac{1}{n}\left(y_i-\frac{x_i}{\Vert x_i\Vert}\right)\right\Vert>1+\left\Vert \sum_{i=1}^n \frac{1}{n}\frac{x_i}{\Vert x_i\Vert}\right\Vert-\frac{\varepsilon}{4}.$$
Then
$$\Vert I-p\Vert\geq \left\Vert \sum_{i=1}^n \frac{1}{n} y_i-p\left(\sum_{i=1}^n \frac{1}{n} y_i\right)\right\Vert\geq \left\Vert \sum_{i=1}^n \frac{1}{n} \left(y_i-\frac{x_i}{\Vert x_i\Vert}\right)\right\Vert$$
$$-\left\Vert \sum_{i=1}^n \frac{1}{n} \left(\frac{x_i}{\Vert x_i\Vert}-p\left(\sum_{i=1}^n \frac{1}{n} y_i\right)\right)\right\Vert.$$
Now
$$\left\Vert \sum_{i=1}^n \frac{1}{n} \left(\frac{x_i}{\Vert x_i\Vert}-p\left(\sum_{i=1}^n \frac{1}{n} y_i\right)\right)\right\Vert\leq \sum_{i=1}^n \frac{1}{n}\left\Vert \frac{x_i}{\Vert x_i\Vert}-\sum_{j=1}^n  x_j^*(y_i)x_j \right\Vert\leq$$
$$\sum_{i=1}^n \frac{1}{n} \left( \left\vert \frac{1}{\Vert x_i\Vert}-x_i^*(x_i)\right\vert \Vert x_i\Vert+\sum_{j\neq i} \vert x_j^*(y_i)\vert\Vert x_j\Vert\right)<\frac{\varepsilon}{4}.$$
Thus
$$\Vert I-p\Vert\geq 1+\left\Vert \sum_{i=1}^n \frac{1}{n} x_i\right\Vert.$$
\end{proof}

Now, as we have done in Theorem \ref{DD2Pestableprodu} for the
DD2P, we will focus on analysing the DSD2P in the $\ell_p$ sum of
two Banach spaces. As every Banach space enjoying to have the
DSD2P has the strong diameter two property, we conclude that the
$\ell_p$ sum of two Banach spaces does not have the DSD2P whenever
$1<p<\infty$ \cite[Theorem 3.2]{abl}. Nevertheless, we will prove
that, as well as happens with Daugavet spaces, DSD2P has a nice
behavior in the case $p=\infty$. We shall begin proving the
following

\begin{proposition}

Let $X,Y$ be Banach spaces and assume that $X\oplus_p Y$ has the
DSD2P for $p\in \{1,\infty\}$. Then $X$ and $Y$ enjoy to have the DSD2P.

\end{proposition}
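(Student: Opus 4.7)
The plan is to lift convex combinations from $B_X$ to $B_{X\oplus_p Y}$, apply the DSD2P hypothesis in the sum, and project back. Fix a convex combination $C=\sum_{i=1}^{n}\lambda_i W_i$ of non-empty relatively weakly open subsets of $B_X$, a point $x=\sum\lambda_i x_i\in C$ with $x_i\in W_i$, and $\varepsilon>0$. Write $\pi_X\colon X\oplus_p Y\to X$ for the coordinate projection and set $\tilde W_i:=\pi_X^{-1}(W_i)\cap B_{X\oplus_p Y}$. Weak-to-weak continuity of $\pi_X$ makes each $\tilde W_i$ relatively weakly open in $B_{X\oplus_p Y}$; since $\|(x_i,0)\|_p=\|x_i\|\le 1$ we have $(x_i,0)\in\tilde W_i$, so $\tilde x:=(x,0)\in\sum\lambda_i\tilde W_i$ with $\|\tilde x\|_p=\|x\|$. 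Applying the DSD2P of $X\oplus_p Y$ at $\tilde x$ with an auxiliary parameter $\eta>0$ produces $(y',z')=\sum\lambda_i(y_i',z_i')\in\sum\lambda_i\tilde W_i$ satisfying $\|(y'-x,z')\|_p>1+\|x\|-\eta$; its $X$-projection $y'=\sum\lambda_i y_i'$ automatically lies in $C$, so the game is to pass the norm estimate from $\|(y'-x,z')\|_p$ to $\|y'-x\|$ alone.

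For $p=\infty$ the inequality reads $\max\{\|y'-x\|,\|z'\|\}>1+\|x\|-\eta$. When $x\ne 0$, choosing $\eta<\min\{\varepsilon,\|x\|\}$ makes $\|z'\|\le 1<1+\|x\|-\eta$, so the maximum is forced to be $\|y'-x\|$ and we obtain $\|y'-x\|>1+\|x\|-\varepsilon$ directly. The degenerate case $x=0$ asks only for $y\in C$ with $\|y\|>1-\varepsilon$, which follows from $\mathrm{diam}(C)=2$, i.e.\ from the SD2P of $X$; the latter is inherited from the DSD2P of $X\oplus_\infty Y$ via standard octahedrality transfer through $\ell_1$-sums of duals. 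The DSD2P of $Y$ is then obtained by the fully symmetric argument that lifts through the other coordinate projection.

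For $p=1$ the inequality becomes $\|y'-x\|+\|z'\|>1+\|x\|-\eta$, and combining with the budget constraint $\|y'\|+\|z'\|\le 1$ inherent in $(y',z')\in B_{X\oplus_1 Y}$ yields $\|y'-x\|>\|y'\|+\|x\|-\eta$; this already settles the regime where the DSD2P output satisfies $\|y'\|>1-(\varepsilon-\eta)$. The step I expect to be the main obstacle is the complementary split-mass regime, where $\|y'\|$ is bounded away from $1$: unlike in the $\ell_\infty$ case, the $\ell_1$ norm distributes the lifted DSD2P bound additively across both coordinates, so the naive projection may forfeit up to $1-\|y'\|$ of the required diametral gap. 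Closing it demands more than plain SD2P; my plan is to modify the lift by replacing $\tilde x=(x,0)$ with $(x,y_0)$ for an $y_0\in B_Y$ scaled so that $\|\tilde x\|_1=1$, which forces the DSD2P output onto $S_{X\oplus_1 Y}$, and then to combine this unit-norm information with the SD2P of $X$ to extract from $C$ a genuine near-unit-norm element that is almost antipodal to $x$.
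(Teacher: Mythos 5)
Your $\ell_\infty$ argument for $x\neq 0$ is correct and is essentially the paper's computation run forwards instead of by contraposition: since $\|z'\|\le 1<1+\|x\|-\eta$ once $\eta<\|x\|$, the max is forced onto the $X$-coordinate. But the two remaining pieces both have problems. For the case $x=0$ you invoke ``SD2P of $X$ is inherited from the DSD2P of $X\oplus_\infty Y$ via octahedrality transfer through $\ell_1$-sums of duals'': this is false as stated, because octahedrality of $X^*\oplus_1 Y^*$ is equivalent to $X^*$ \emph{or} $Y^*$ being octahedral, so the SD2P of $X\oplus_\infty Y$ only forces the SD2P of one summand, not of $X$ specifically. (The case is nevertheless salvageable without any of this: if $0\in C$, pick $x_\delta\in C$ nonzero with $\|x_\delta\|<\delta$ and apply your $x\neq 0$ case at $x_\delta$; the resulting $y$ has $\|y\|>1-\varepsilon-\delta$. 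You did not do this.)

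The real gap is $p=1$, and you correctly identify it but your proposed repair does not close it. Even if you renormalize the lifted base point to lie on $S_{X\oplus_1 Y}$ (which already fails in general, since $(x_i,y_0^i)\in B_{X\oplus_1 Y}$ forces $\|y_0^i\|\le 1-\|x_i\|$ and then $\|\sum\lambda_i(x_i,y_0^i)\|_1=\|x\|+\|\sum\lambda_i y_0^i\|\le \|x\|+1-\sum\lambda_i\|x_i\|$, which is $<1$ unless $\|\sum\lambda_i x_i\|=\sum\lambda_i\|x_i\|$), the DSD2P of the sum applied to the cylinder sets $\pi_X^{-1}(W_i)\cap B_Z$ can be witnessed \emph{entirely} in the $Y$-coordinate: the conclusion $\|y'-x\|+\|z''-z'\|>2-\eta$ is compatible with $y'=x$. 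Knowing additionally that $X$ has the SD2P does not repair this, because the SD2P gives large diameter of $C$, not a point of $C$ far from the \emph{given} $x$. The idea you are missing is to make the lifted sets thin in the $Y$-direction before applying the hypothesis: the paper first replaces each $W_i$ by an intersection of slices $S(B_X,x_{ij}^*,\eta)$ with $\eta<\varepsilon/2$ (Lemma \ref{lemakadets}) and lifts these to $S(B_{X\oplus_1 Y},(x_{ij}^*,0),\eta)$; in the $\ell_1$-sum such a slice forces $\|u\|>1-\eta$ and hence $\|v\|<\eta$ for its elements $(u,v)$, so the lifted convex combination sits inside $C\times\eta B_Y$ and the $Y$-coordinate contributes at most $\eta$ to the $\ell_1$-distance. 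The paper then runs the whole argument contrapositively (failure in $X$ yields failure in $X\oplus_p Y$ on this thin set), which sidesteps having to control where the DSD2P output of the sum lands. Without some device of this kind your $p=1$ case remains unproved.
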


\begin{proof}
In order to prove the Proposition, assume that $X$ does not
satisfy the DSD2P. Then there exists $C:=\sum_{i=1}^n \lambda_i
\bigcap\limits_{j=1}^{n_i}S(B_X,x_{ij}^*,\eta_{ij})$ a convex
combination of non-empty relatively weakly open subsets of $B_X$,
an element $\sum_{i=1}^n \lambda_i x_i\in C$ and
$\varepsilon\in\mathbb R^+$ satisfying that
\begin{equation}\label{condinecefallaDSD2P}
\left\Vert \sum_{i=1}^n \lambda_i(x_i-y_i)\right\Vert\leq
1+\left\Vert \sum_{i=1}^n \lambda_i x_i\right\Vert-\varepsilon\
\forall\sum_{i=1}^n \lambda_i y_i\in C.
\end{equation}
Obviously we will assume the non-trivial case (i.e. $\sum_{i=1}^n
\lambda_i x_i\neq 0$), so we can assume, taking
$\delta<\varepsilon$ if necessary, that $\left\Vert \sum_{i=1}^n
\lambda_i x_i\right\Vert-\varepsilon\geq 0$.

Using Lemma \ref{lemakadets} as much times as necessary we can
assume that each number $\eta_{ij}$ are equal (say $\eta$) and
that  $\eta<\frac{\varepsilon}{2}\ \forall i\in\{1,\ldots, n\}$.
Define
$$\mathcal C:=\sum_{i=1}^n \lambda_i\bigcap\limits_{j=1}^{n_i}
S(B_{X\oplus_p Y},(x_{ij}^*,0),\eta). $$
If $p=1$ we have from \cite[Theorem 3.1, equation (3.1)]{abl} that
\begin{equation}\label{condinece1suma}
\mathcal C\subseteq C\times \eta B_Y.
\end{equation}
So consider $\sum_{i=1}^n \lambda_i (x_i,0)\in \mathcal C$ and
pick $\sum_{i=1}^n\lambda_i (x_i',y_i')\in \mathcal C$. Then
$$\left\Vert \sum_{i=1}^n \lambda_i ((x_i,0)-(x_i',y_i')) \right\Vert=\left\Vert \sum_{i=1}^n \lambda_i (x_i-x_i')\right\Vert+\left\Vert \sum_{i=1}^n \lambda_i y_i'\right\Vert.$$
Now on the one hand, from (\ref{condinecefallaDSD2P}), we have the
inequality
$$\left\Vert \sum_{i=1}^n \lambda_i (x_i-x_i')\right\Vert\leq 1+\left\Vert \sum_{i=1}^n \lambda_i x_i\right\Vert-\varepsilon.$$
On the other hand we have from (\ref{condinece1suma}) the
following
$$\left\Vert \sum_{i=1}^n \lambda_i y_i'\right\Vert<\eta.$$
So combining both previous inequalities and keeping in mind that
$\eta<\frac{\varepsilon}{2}$ we conclude
$$\left\Vert \sum_{i=1}^n \lambda_i ((x_i,0)-(x_i',y_i')) \right\Vert\leq 1+\left\Vert \sum_{i=1}^n \lambda_i x_i\right\Vert-\frac{\varepsilon}{2}.$$
From the arbitrariness of $\sum_{i=1}^n\lambda_i (x_i',y_i')\in
\mathcal C$ we conclude that $X\oplus_1 Y$ fails the DSD2P, so we
are done in the case $p=1$.

The case $p=\infty$ is quite easier than the above one. Indeed,
pick $\sum_{i=1}^n \lambda_i (x_i',y_i')\in\mathcal C$. Then
$$\left\Vert \sum_{i=1}^n \lambda_i ((x_i,0)-(x_i',y_i')) \right\Vert=\max\left\{\left\Vert \sum_{i=1}^n \lambda_i (x_i-x_i')\right\Vert,\left\Vert \sum_{i=1}^n \lambda_i y_i\right\Vert \right\}$$
$$\leq \max\left\{1+\left\Vert \sum_{i=1}^n \lambda_i x_i\right\Vert-\varepsilon,1 \right\}=1+\left\Vert \sum_{i=1}^n \lambda_i x_i\right\Vert-\varepsilon,$$
where the last inequality holds from the assumption
$\left\Vert\sum_{i=1}^n \lambda_i x_i\right\Vert-\varepsilon\geq
0$.

Hence, $X\oplus_\infty Y$ does not have the DSD2P, so we are
done.\vspace{0.5cm}

\end{proof}

Now we shall establish the converse of the result above for
$p=\infty$.

\begin{theorem}\label{DSD2Pestainfinsum}

Let $X,Y$ be a Banach spaces. If $X$ and $Y$ have the DSD2P so does $Z:=X\oplus_\infty Y$.

\end{theorem}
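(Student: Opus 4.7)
The plan is to verify the characterization of the DSD2P in terms of weakly convergent nets provided by Proposition \ref{caraDSD2Predes}. Fix elements $(x_1,y_1),\ldots,(x_n,y_n)\in B_Z$ and positive scalars $\lambda_1,\ldots,\lambda_n$ with $\sum_{i=1}^n\lambda_i=1$. Since $\|(x_i,y_i)\|_\infty\le 1$, we have $x_i\in B_X$ and $y_i\in B_Y$ for each $i$. By applying Proposition \ref{caraDSD2Predes} to $X$ and to $Y$, for every $i\in\{1,\ldots,n\}$ there exist nets $\{x^i_s\}_{s\in S}\subset B_X$ and $\{y^i_t\}_{t\in T}\subset B_Y$ weakly convergent to $x_i$ and $y_i$, respectively, such that
$$\left\|\sum_{i=1}^n\lambda_i(x_i-x^i_s)\right\|\longrightarrow 1+\left\|\sum_{i=1}^n\lambda_i x_i\right\|\quad\text{and}\quad \left\|\sum_{i=1}^n\lambda_i(y_i-y^i_t)\right\|\longrightarrow 1+\left\|\sum_{i=1}^n\lambda_i y_i\right\|.$$

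Next, I would consider the product directed set $S\times T$ and define the net $\{(x^i_s,y^i_t)\}_{(s,t)\in S\times T}\subset B_Z$, which converges to $(x_i,y_i)$ in the weak topology of $Z$ (recall that the weak topology of an $\ell_\infty$-sum is the product of the respective weak topologies). For every $(s,t)\in S\times T$, the definition of the $\ell_\infty$-norm gives
$$\left\|\sum_{i=1}^n\lambda_i\bigl((x_i,y_i)-(x^i_s,y^i_t)\bigr)\right\|_\infty=\max\left\{\left\|\sum_{i=1}^n\lambda_i(x_i-x^i_s)\right\|,\left\|\sum_{i=1}^n\lambda_i(y_i-y^i_t)\right\|\right\}.$$

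Taking limits along $S\times T$ and using the continuity of the $\max$ function, the right-hand side converges to
$$\max\left\{1+\left\|\sum_{i=1}^n\lambda_ix_i\right\|,\ 1+\left\|\sum_{i=1}^n\lambda_iy_i\right\|\right\}=1+\left\|\sum_{i=1}^n\lambda_i(x_i,y_i)\right\|_\infty.$$
This is precisely condition (2) of Proposition \ref{caraDSD2Predes} applied to $Z$, so $Z$ has the DSD2P.

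I do not expect a serious obstacle here; the only point requiring a small amount of care is the interaction of the product-net indexing with the limit of the maximum. Since each of the two individual norms is trivially bounded above by $1+\|\sum\lambda_i x_i\|$ and $1+\|\sum\lambda_i y_i\|$ respectively (by the triangle inequality), the convergence along the product net is genuinely to the maximum, not merely bounded by it.
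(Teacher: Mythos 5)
Your proof is correct and follows essentially the same route as the paper: both verify condition (2) of Proposition \ref{caraDSD2Predes} using the formula $\Vert(u,v)\Vert_\infty=\max\{\Vert u\Vert,\Vert v\Vert\}$. The only (harmless) difference is that the paper assumes without loss of generality that the norm of $\sum_i\lambda_i(x_i,y_i)$ is attained in the $X$-coordinate and perturbs only that coordinate, keeping each $y_i$ fixed, whereas you perturb both coordinates via a product net and use $\max\{1+a,1+b\}=1+\max\{a,b\}$; both arguments are valid.
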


\begin{proof}
Pick $n\in\mathbb N$, $(x_1,y_1),\ldots, (x_n,y_n)\in B_Z$ and
$\lambda_1,\ldots, \lambda_n\in \mathbb R^+$ such that
$\sum_{i=1}^n \lambda_i=1$. In order to prove that $Z$ has the
DSD2P we shall use Proposition \ref{caraDSD2Predes}. As
$$\left\Vert \sum_{i=1}^n \lambda_i(x_i,y_i)\right\Vert_\infty=
\max\left\{\left\Vert \sum_{i=1}^n \lambda_i
x_i\right\Vert,\left\Vert \sum_{i=1}^n \lambda_i y_i\right\Vert
\right\},$$
then either $\left\Vert \sum_{i=1}^n
\lambda_i(x_i,y_i)\right\Vert_\infty= \left\Vert \sum_{i=1}^n
\lambda_i x_i\right\Vert$ or $\left\Vert \sum_{i=1}^n
\lambda_i(x_i,y_i)\right\Vert_\infty= \left\Vert \sum_{i=1}^n
\lambda_i y_i\right\Vert$. We shall assume, with no loss of
generality, that $\left\Vert \sum_{i=1}^n
\lambda_i(x_i,y_i)\right\Vert_\infty= \left\Vert \sum_{i=1}^n
\lambda_i x_i\right\Vert$. Now, as $X$ has the DSD2P, we have from
Proposition \ref{caraDSD2Predes} that, for each $i\in\{1,\ldots,
n\}$, there exists $\{x_s^i\}$ a net weakly convergent to $x_i$
such that
$$\left\Vert \sum_{i=1}^n \lambda_i (x_i-x_s^i)\right\Vert\rightarrow 1+\left\Vert\sum_{i=1}^n \lambda_i x_i\right\Vert.$$
Now we have that $\{(x_s^i,y_i)\}\rightarrow (x_i,y_i)$ in the
weak topology of $Z$ for each $i\in\{1,\ldots, n\}$. Moreover,
from the definition of the norm on $Z$, we have that
$(x_s^i,y_i)\in B_Z$ for each $i\in\{1,\ldots, n\}$ and for each
$s$. Finally, given $s$ one has
$$1+\left\Vert \sum_{i=1}^n \lambda_i (x_i,y_i)\right\Vert_\infty\geq \left\Vert \sum_{i=1}^n \lambda_i((x_i,y_i)-(x_s^i,y_i))
\right\Vert_\infty$$
$$\geq \left\Vert \sum_{i=1}^n \lambda_i (x_i-x_s^i)\right\Vert\rightarrow 1+\left\Vert\sum_{i=1}^n \lambda_i x_i\right\Vert=1+\left\Vert \sum_{i=1}^n \lambda_i (x_i,y_i)\right\Vert_\infty. $$
So $\left\Vert \sum_{i=1}^n \lambda_i((x_i,y_i)-(x_s^i,y_i))
\right\Vert_\infty\rightarrow 1+\left\Vert \sum_{i=1}^n \lambda_i
(x_i,y_i)\right\Vert_\infty$. Consequently, $Z$ has the DSD2P
applying Proposition \ref{caraDSD2Predes}, so we are done. \end{proof}

Finally we will analyze the inheritance of DSD2P to subspaces.
Again in \cite{blr} it is proved that given $X$ a Banach space
with the SD2P and $Y\subseteq X$ a closed subspace such that
$X/Y$ is strongly regular, then $Y$ has the SD2P. Following
similar ideas we have the following

\begin{theorem}\label{pasoDSD2P}
Let $X$ be a Banach space and $Y\subseteq X$ be a closed subspace. If $X$ has the DSD2P and $X/Y$ is strongly regular then $Y$ also
has the DSD2P.
\end{theorem}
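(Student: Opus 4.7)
The plan is to extend the argument of Theorem~\ref{pasoDD2P} to convex combinations of weakly open sets, using the strong regularity of $X/Y$ in place of its finite-dimensionality. Let $C=\sum_{i=1}^{n}\lambda_i W_i$ be a convex combination of non-empty relatively weakly open subsets of $B_Y$, let $y_0=\sum_i \lambda_i y_i\in C$ with $y_i\in W_i\cap B_Y$, and let $\varepsilon>0$; the goal is to produce $z\in C$ with $\Vert z-y_0\Vert>1+\Vert y_0\Vert-\varepsilon$.

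First I would describe each $W_i$ as $\{y\in Y:\vert y^*_{ij}(y-y_i)\vert<\varepsilon_{ij},\ 1\le j\le m_i\}$ for finitely many functionals $y^*_{ij}\in Y^*$, extend these by Hahn--Banach to $\tilde y^*_{ij}\in X^*$, and form the associated weakly open sets $U_i\subseteq X$ satisfying $U_i\cap Y\subseteq W_i$ and $y_i\in U_i\cap B_X$. Then each $A_i:=p(U_i\cap B_X)\cap B_{X/Y}$, where $p:X\to X/Y$ denotes the quotient map (a $w$-$w$ open map), is a non-empty relatively weakly open convex subset of $B_{X/Y}$ containing $0=p(y_i)$. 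The crucial intermediate step is to use the strong regularity of $X/Y$ together with Bourgain's lemma (which locates convex combinations of slices inside any relatively weakly open convex subset of a bounded convex set) to refine, for any prescribed $\delta>0$, the $A_i$'s to non-empty relatively weakly open subsets $V_i\subseteq A_i$ such that $diam\bigl(\sum_i\lambda_i V_i\bigr)<\delta$ and $0\in\sum_i\lambda_i V_i$; this is where strong regularity takes over the role played by finite-dimensionality in the proof of Theorem~\ref{pasoDD2P}.

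With the $V_i$'s in hand, set $B_i:=p^{-1}(V_i)\cap U_i\cap B_X$, which is a non-empty relatively weakly open subset of $B_X$ containing $y_i$; then $D:=\sum_i\lambda_i B_i$ is a convex combination of non-empty relatively weakly open subsets of $B_X$ containing $y_0$. Applying the DSD2P of $X$ to $D$ and $y_0$ yields $x'=\sum_i\lambda_i x'_i\in D$ with $\Vert x'-y_0\Vert>1+\Vert y_0\Vert-\delta$. Since $p(x')\in\sum_i\lambda_i V_i\subseteq \delta B_{X/Y}$, I choose $u\in Y$ with $\Vert u-x'\Vert<\delta$, normalize it as at the end of Theorem~\ref{pasoDD2P} to obtain an element $z\in B_Y$ with $\Vert x'-z\Vert$ of order $\delta$, and then check that $z$ lies in a slight weak-neighborhood enlargement of $C$. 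The usual $\eta$-enlargement trick from the final lines of Theorem~\ref{pasoDD2P} (starting with a strictly smaller weak neighborhood $\widehat C$ whose $\eta$-enlargement is still contained in $C$) then transfers the conclusion to the original $C$, and the estimate $\Vert z-y_0\Vert\ge\Vert x'-y_0\Vert-\Vert x'-z\Vert>1+\Vert y_0\Vert-\varepsilon$ closes the argument.

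The principal obstacle is the strong-regularity refinement step: producing $V_i\subseteq A_i$ with small-diameter convex combination around $0$ for the \emph{prescribed} coefficients $\lambda_i$. Strong regularity directly yields convex combinations of slices of bounded convex sets with arbitrarily small diameter, but here these combinations must be located inside given relatively weakly open subsets and must carry the same weights $\lambda_i$; this should follow from Bourgain's lemma together with a careful iteration of strong regularity, mirroring the argument used in \cite{blr} for the SD2P case. A secondary difficulty is verifying that the approximating element $z\in Y$ can actually be exhibited as a convex combination of elements of slight weak-neighborhood enlargements of the $W_i$'s, which reduces to a linear decomposition argument in $Y$ against the finitely many functional constraints defining the $W_i$'s.
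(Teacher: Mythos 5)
Your overall architecture (extend the functionals, push the open sets down to $X/Y$, use strong regularity to control the quotient, pull back, apply the DSD2P of $X$, and project back into $Y$) matches the paper's, but your ``crucial intermediate step'' contains a genuine gap that the rest of the argument cannot absorb. You ask for non-empty \emph{relatively weakly open} subsets $V_i\subseteq A_i$ with $diam\bigl(\sum_i\lambda_i V_i\bigr)<\delta$. Fixing points in all $V_j$ with $j\neq i$ and varying over $V_i$ shows $diam\bigl(\sum_i\lambda_i V_i\bigr)\geq \lambda_i\, diam(V_i)$, so each single $V_i$ would have to have diameter less than $\delta/\lambda_i$. Strong regularity does not produce individual slices or relatively weakly open sets of small diameter --- that is essentially the (convex) point of continuity property, which is strictly stronger. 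Strong regularity only produces \emph{convex combinations of slices} of small diameter, with weights it chooses, not weights you prescribe; and Bourgain's lemma goes in the opposite direction (it puts a convex combination of slices \emph{inside} a weakly open set, it does not shrink the open set). So the refinement you rely on fails in general, and with it the containment $p(x')\in\delta B_{X/Y}$ that drives the final projection step.

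The paper's proof circumvents this precisely at that point: for each $i$ separately it picks (via \cite[Proposition III.6]{ggms}) a strongly regular point $a_i$ of $\overline{A_i}$ with $\Vert a_i\Vert<\delta/32$ and a convex combination of slices $\sum_{j=1}^{m_i}\mu_j^i\bigl(S(B_{X/Y},(a_j^i)^*,\alpha_j^i)\cap\overline{A_i}\bigr)$ of diameter less than $\delta/32$ containing $a_i$, with weights $\mu_j^i$ dictated by strong regularity. The DSD2P of $X$ is then applied to the \emph{refined} convex combination $\mathcal C=\sum_i\lambda_i\sum_j\mu_j^i\bigl(\mathcal W_i\cap S(B_X,\pi^*((a_j^i)^*),\alpha_j^i)\bigr)$, whose weights are $\lambda_i\mu_j^i$ rather than your $\lambda_i$. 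This creates a new, non-trivial obligation that your sketch does not address: one must exhibit a point of $\mathcal C$ within $O(\delta)$ of the given $\sum_i\lambda_i x_i$, which the paper does in a separate Claim by lifting each $b_{ij}$ to $z_{ij}\in B_X$ and decomposing $z_i=\sum_j\mu_j^i z_{ij}+y_i$ with $y_i\in Y$ of norm less than $\delta/32$, followed by a normalization by $1+\delta/32$. If you replace your $V_i$-refinement by this two-level convex combination and supply that decomposition Claim, the remainder of your outline (small quotient norm of the output, projection into $B_Y$, the $\eta$-enlargement trick) goes through as you describe.
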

\begin{proof}

Let $$C:=\sum_{i=1}^n \lambda_i W_i=\sum_{i=1}^n \lambda_i \left\{
y\in B_Y\ :\ \vert y_{ij}^*(y-y_0^i)\vert<\eta_{ij}\ \ 1\leq j\leq
n_i \right\}$$ be a convex combination of non-empty relatively weakly
open subsets of $B_Y$, where $y_{ij}^*\in B_{Y^*}$ for each
$i\in\{1,\ldots, n\}, j\in\{1,\ldots, n_i\}$ and $y_0^i\in Y$ for
each $i\in\{1,\ldots, n\}$. Pick $\sum_{i=1}^n \lambda_i x_i\in
C$, $\varepsilon\in\mathbb R^+$ and let us prove that there exists
$\sum_{i=1}^n \lambda_i y_i\in C$ such that
$$\left\Vert \sum_{i=1}^n \lambda_i(x_i-y_i)\right\Vert>1+\left\Vert\sum_{i=1}^n \lambda_i x_i\right\Vert-\varepsilon.$$
To this aim pick $0<\delta$ such that
\begin{equation}\label{condidelta}
\vert
y_{ij}^*(x_i-y_0^i)\vert+\frac{\delta}{32}+\left(1+\frac{\delta}{32}\right)\left(1-\frac{1}{1+\frac{\delta}{32}}
\right)<\eta_{ij}\ \forall i\in\{1,\ldots, n\}
\end{equation}
and
\begin{equation}\label{condideltaepsilon}
\frac{\delta}{16}+2\left(1+\frac{\delta}{32}\right)\left(1-\frac{1}{1+\frac{\delta}{32}}
\right)+\frac{\delta}{4}<\varepsilon.
\end{equation}

Let $\pi:X\longrightarrow X/Y$ the quotient map. We have no
loss of generality, by Hahn-Banach theorem, if we assume that
$y_{ij}^*\in B_{X^*}$ for each  $i\in\{1,\ldots, n\},
j\in\{1,\ldots, n_i\}$. Consider $\mathcal W_i$ the non-empty
relatively weakly open subset of $B_X$ defined by $W_i$ for each
$i\in\{1,\ldots, n\}$.

For each $i\in\{1,\ldots, n\}$ consider $A_i:=\pi(\mathcal W_i)$,
which is a convex subset of $B_{X/Y}$ containing to zero. By
\cite[Proposition III.6]{ggms} then $\overline{A_i}$ is equal to
the closure of the set of its strongly regular  points. As a
consequence, for each  $i\in\{1,\ldots, n\}$, there exists $a_i$ a
strongly regular point of $\overline{A_i}$ such that
\begin{equation}\label{normafuertexp}
\left\Vert a_i\right\Vert<\frac{\delta}{32}.
\end{equation}
For every $i\in\{1,\ldots,n\}$ we can find $m_i\in\mathbb N,
\mu_1^i,\ldots, \mu_{m_i}^i\in ]0,1]$ such that $\sum_{j=1}^{m_i}
\mu_j^i=1$ and $(a_1^i)^*,\ldots, (a_{m_i}^i)^*\in
S_{(X/Y)^*},\alpha_j^i\in\mathbb R^+$ satisfying that
$$a_i\in \sum_{j=1}^{m_i}\mu_j^i (S(B_{X/Y},(a_j^i)^*,\alpha_j^i)\cap \overline{A_i})$$
and
\begin{equation}\label{diaslicescociente}
diam\left(\sum_{j=1}^{m_i} \mu_j^i
(S(B_{X/Y},(a_j^i)^*,\alpha_j^i)\cap
\overline{A_i})\right)<\frac{\delta}{32}.
\end{equation}
It is clear that, for $i\in\{1,\ldots, n\}$ and $j\in\{1,\ldots,
m_i\}$, one has
$$S(B_{X/Y},(a_j^i)^*,\alpha_j^i)\cap A_i\neq \emptyset\Rightarrow S(B_X,\pi^*((a_j^i)^*),\alpha_j^i)\cap \mathcal W_i\neq \emptyset.$$
Check that we can not still apply the hypothesis because we do not
know whether $\sum_{i=1}^n \lambda_i x_i\in \mathcal
C:=\sum_{i=1}^n \lambda_i \sum_{j=1}^{m_i}\mu_j^i (\mathcal
W_i\cap S(B_X,\pi^*((a_j^i)^*),\alpha_j^i))$. Now, in order to
finish the proof, we need to find points in $\mathcal C$ close
enough to  $\sum_{i=1}^n \lambda_i x_i$. This will be done in the
following

\begin{claim}

We can find, for each $i\in\{1,\ldots, n\}$, an element $z_i\in
B_X$ such that $\sum_{i=1}^n \lambda_i z_i\in \mathcal C$ and that
\begin{equation}\label{elementoscerca}
\left\Vert \sum_{i=1}^n \lambda_i
(x_i-z_i)\right\Vert<\frac{\delta}{32}+\left(1+\frac{\delta}{32}\right)\left(1-\frac{1}{1+\frac{\delta}{32}}\right).
\end{equation}

\end{claim}

\begin{proof}
Pick $i\in\{1,\ldots, n\}$. As $\Vert \pi(x_i)-a_i\Vert=\Vert
a_i\Vert<\frac{\delta}{32}$ we can find $z_i\in X$ such that
$\pi(z_i)=a_i$ and such that
\begin{equation}\label{reprepuntoregular}
\Vert x_i-z_i\Vert<\frac{\delta}{32}.
\end{equation}
Now $a_i\in \sum_{j=1}^{m_i}\mu_j^i
(S(B_{X/Y},(a_j^i)^*,\alpha_j^i)\cap \overline{A_i})$ so, for each
$j\in\{1,\ldots, m_i\}$, we can find $b_{ij}\in
S(B_{X/Y},(a_j^i)^*,\alpha_j^i)\cap \overline{A_i}$ such that
$a_i=\sum_{j=1}^{m_i}\mu_j^i b_{ij}$. For each $j\in\{1,\ldots,
m_i\}$ we can find, considering a perturbation argument if
necessary, an element $z_{ij}\in B_X$ such that
$\pi(z_{ij})=b_{ij}$. Finally, as $\pi(z_i)-\sum_{j=1}^{m_i}
\mu_j^i\pi(z_{ij})=0$ we can find, by definition of the norm on
$X/Y$, an element $y_i\in Y$ such that
\begin{equation}\label{defiy}
z_i=\sum_{j=1}^{m_i}\mu_j^iz_{ij}+y_i,
\end{equation}
and
\begin{equation}\label{normay}
\Vert y_i\Vert<\frac{\delta}{32}.
\end{equation}
We shall prove that $\sum_{i=1}^n \lambda_i
\frac{z_i}{1+\frac{\delta}{32}}$ works. First of all we have
$$\left\Vert\sum_{i=1}^n \lambda_i z_i\right\Vert\leq \left\Vert\sum_{i=1}^n \lambda_i x_i\right\Vert+\sum_{i=1}^n \lambda_i \Vert x_i-z_i\Vert\mathop{<}\limits^
{\mbox{{\small{(\ref {reprepuntoregular})}}}}
1+\frac{\delta}{32}.$$
So $\frac{z_i}{1+\frac{\delta}{32}}\in B_X$ for each
$i\in\{1,\ldots, n\}$.

Moreover, given $i\in\{1,\ldots, n\}, j\in\{1,\ldots, n_i\}$, one
has
$$\left\vert y_{ij}^*\left(\frac{z_i}{1+\frac{\delta}{32}}-y_0^i \right) \right\vert\leq \vert y_{ij}^*(x_i-y_0^i)\vert+\Vert x_i-z_i\Vert+\left\Vert z_i-\frac{z_i}{1+\frac{\delta}{32}}\right\Vert$$
$$\mathop{<}
\limits^{\mbox{{\small{ (\ref{reprepuntoregular})}}}}  \vert
y_{ij}^*(x_i-y_0^i)\vert+\frac{\delta}{32}+\left(1+\frac{\delta}{32}\right)\left(1-\frac{1}{1+\frac{\delta}{32}}
\right)\mathop{<} \limits^{\mbox{ {\small{(\ref{condidelta})}}}}
\eta_{ij}.$$
Finally, pick $i\in\{1,\ldots, n\}$ and $j\in\{1,\ldots, m_i\}$.
Then by (\ref{defiy}) one has
$$z_i=\sum_{j=1}^{m_i}\mu_j^i(z_{ij}+y_i).$$
In addition
$$\pi^*(a_{ij}^*)(z_{ij}+y_i)=a_{ij}^*(b_{ij})+
a_{ij}^*(\pi(y_i)).$$
On the one hand, as $y_i\in Y$ then $\pi(y_i)=0$. On the other
hand $a_{ij}^*(b_{ij})>1-\alpha_j^i$. Now, up to consider a
smaller positive number in (\ref{reprepuntoregular}) (check that
the choice of $b_{ij}$ does not depend on the one of $z_1,\ldots,
z_n$), we can assume that $a_{ij}^*(b_{ij})>(1-\alpha_j^i)
\left(1+\frac{\delta}{32}\right)$, so $\sum_{i=1}^n \lambda_i
\frac{z_i}{1+\frac{\delta}{32}}\in\mathcal C$. Now  the claim
follows just considering $\frac{z_i}{1+\frac{\delta}{32}}$ instead
of $z_i$.

\end{proof}

Now, as $X$ has the DSD2P, we can find $\sum_{i=1}^n \lambda_i
z_i'\in \mathcal C$ such that
\begin{equation}\label{aplicahipoDSD2P}
\left\Vert\sum_{i=1}^n \lambda_i
(z_i-z_i')\right\Vert>1+\left\Vert\sum_{i=1}^n \lambda_i
z_i\right\Vert-\frac{\delta}{32}.
\end{equation}
Given $i\in\{1,\ldots, n\}$ we have that
$$\pi(z_i')\in \sum_{j=1}^{m_i}\mu_j^i (S(B_{X/Y},(a_j^i)^*,\alpha_j^i)\cap \overline{A_i})$$
$$\Rightarrow \Vert \pi(z_i')\Vert\leq \Vert a_i\Vert+ diam\left(\sum_{j=1}^{m_i}\mu_j^i (S(B_{X/Y},(a_j^i)^*,\alpha_j^i)\cap \overline{A_i}) \right)\mathop{<}\limits^{\mbox
{{\small{(\ref{normafuertexp})(\ref{diaslicescociente})}}}}
\frac{\delta}{16}. $$
Now, as it is done in Proposition \ref{pasoDD2P}, we can find
$y_i\in B_Y$ such that
\begin{equation}\label{elementoseny}
\Vert y_i-z_i'\Vert<\frac{\delta}{4}.
\end{equation}
Now, on the one hand, given $i\in\{1,\ldots, n\}$ and
$j\in\{1,\ldots, n_i\}$, one has
$$\vert y_{ij}^*(y_i-y_0)\vert\leq \vert y_{ij}^*(z_i'-y_0)\vert+\vert y_{ij}^*(y_i-z_i)\vert <\eta_{ij}+\frac{\delta}{4}.$$
On the other hand,
$$\left\Vert\sum_{i=1}^n \lambda_i(x_i-y_i)\right\Vert\geq \left\Vert\sum_{i=1}^n \lambda_i (z_i-z_i')\right\Vert-\left\Vert\sum_{i=1}^n \lambda_i (x_i-z_i)\right\Vert-\left\Vert\sum_{i=1}^n \lambda_i (y_i-z_i')\right\Vert$$
$$\mathop{>}\limits^{\mbox{{\small{
(\ref{aplicahipoDSD2P})}}}}1+ \left\Vert\sum_{i=1}^n \lambda_i
z_i\right\Vert-\frac{\delta}{32}-\sum_{i=1}^n \lambda_i\Vert
y_i-z_i'\Vert-\left\Vert\sum_{i=1}^n \lambda_i
(z_i-x_i)\right\Vert$$
$$\mathop{>}\limits^{\mbox{{\small{
(\ref{elementoscerca})(\ref{elementoseny})}}}} 1+
\left\Vert\sum_{i=1}^n \lambda_i
z_i\right\Vert-\frac{\delta}{32}-\frac{\delta}{4}-\frac{\delta}{32}-\left(1+\frac{\delta}{32}\right)\left(1-\frac{1}{1+\frac{\delta}{32}}\right)$$
$$\mathop{>}\limits^{\mbox{{\small{
(\ref{elementoscerca})}}}}1 +\left\Vert\sum_{i=1}^n \lambda_i
x_i\right\Vert-\frac{\delta}{16}-\frac{\delta}{4}-2\left(1+\frac{\delta}{32}\right)\left(1-\frac{1}{1+\frac{\delta}{32}}\right)$$
$$\mathop{>}\limits^{\mbox
{{\small{(\ref{condideltaepsilon})}}}}1+ \left\Vert\sum_{i=1}^n
\lambda_i x_i\right\Vert-\varepsilon.$$
From the arbitrariness of $\varepsilon$ we conclude that $Y$ has
the DSD2P by a perturbation argument similar to the one done in
Proposition \ref{pasoDD2P}.\end{proof}

Now we have a weak-star version of Theorem \ref{pasoDSD2P}.

\begin{corollary}\label{subespaciosw^*DSD2P}

Let $X$ be a Banach space and let $Y\subseteq X$ be a closed
subspace. If $X^*$ has the $w^*$-DSD2P and $Y$ is reflexive then $(X/Y)^*$
has the $w^*$-DSD2P.

\end{corollary}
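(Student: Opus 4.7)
The plan is to adapt the proof of Theorem \ref{pasoDSD2P} to the weak-star setting, replacing the hypothesis that $X/Y$ is strongly regular by the observation that when $Y$ is reflexive, the quotient $X^*/Y^\circ$ is canonically isometric to $Y^*$, which is itself reflexive and therefore strongly regular, with the additional bonus that its weak-star and weak topologies coincide. Throughout, the weak-star to weak-star continuous quotient map $\pi\colon X^*\to X^*/Y^\circ=Y^*$ will play the role that the quotient $\pi\colon X\to X/Y$ played in Theorem \ref{pasoDSD2P}.

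First I would fix a convex combination $\mathcal{C}_0=\sum_{i=1}^n\lambda_i W_i$ of nonempty relatively weak-star open subsets of $B_{Y^\circ}$, a point $\sum_{i=1}^n\lambda_i z_i^*\in \mathcal{C}_0$, and $\varepsilon>0$. Since the defining functionals of each $W_i$ come from $X/Y$, they may be viewed directly as elements of $X$ (functionals on $X^*$), so extending the same inequalities to $B_{X^*}$ produces nonempty relatively weak-star open sets $\widetilde{W}_i\subseteq B_{X^*}$ with $W_i=\widetilde{W}_i\cap B_{Y^\circ}$, and I set $\widetilde{\mathcal{C}}:=\sum_{i=1}^n\lambda_i\widetilde{W}_i$. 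Because $z_i^*\in Y^\circ=\ker\pi$, one has $\pi(z_i^*)=0$, so the convex sets $A_i:=\pi(\widetilde{W}_i)\cap B_{Y^*}$ all contain $0$.

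The crucial step is to import strong regularity of $Y^*$ via reflexivity: by \cite[Proposition III.6]{ggms} applied inside $Y^*$, the strongly regular points of $\overline{A_i}$ are dense in $\overline{A_i}$, so for each $i$ I can pick a strongly regular point $a_i\in\overline{A_i}$ with $\|a_i\|$ arbitrarily small, and then express
\[a_i\in\sum_{j=1}^{m_i}\mu_j^i\bigl(S(B_{Y^*},(a_j^i)^*,\alpha_j^i)\cap\overline{A_i}\bigr)\]
with diameter as small as required. Because $Y$ is reflexive, each $(a_j^i)^*\in Y^{**}$ is realized by an element of $Y\subseteq X$, so the slices involved are genuine weak-star slices of $B_{Y^*}$, and their preimages under $\pi$ are weak-star slices of $B_{X^*}$. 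Intersecting $\widetilde{W}_i$ with these preimages yields a refined convex combination $\widetilde{\mathcal{C}}'$ of nonempty relatively weak-star open subsets of $B_{X^*}$ on which the $w^*$-DSD2P of $X^*$ can be invoked.

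From here the remainder of the argument mirrors Theorem \ref{pasoDSD2P} line by line: lifting the $a_i$'s to $B_{X^*}$ and normalizing by a factor $1/(1+\delta/32)$ produces an approximation $\sum\lambda_i (z_i^*)'\in\widetilde{\mathcal{C}}'$ close to $\sum\lambda_i z_i^*$; applying the $w^*$-DSD2P of $X^*$ gives $\sum\lambda_i w_i^*\in\widetilde{\mathcal{C}}'$ with $\bigl\|\sum\lambda_i((z_i^*)'-w_i^*)\bigr\|$ close to $1+\bigl\|\sum\lambda_i (z_i^*)'\bigr\|$; and since $\pi(w_i^*)$ lies in a small-diameter slice containing $a_i$ of small norm, $\|\pi(w_i^*)\|$ is small, hence there exist $y_i^*\in Y^\circ$ with $\|y_i^*-w_i^*\|$ small. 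Checking that $\sum\lambda_i y_i^*\in\mathcal{C}_0$ and combining the estimates as in Theorem \ref{pasoDSD2P}, together with a final perturbation argument, yields the desired inequality. The main obstacle is the dualization of the strong regularity argument: making sure that the slices produced by strong regularity of $Y^*$ are in fact weak-star slices, whose preimages remain relatively weak-star open in $B_{X^*}$. This is precisely where the reflexivity of $Y$ is essential, since it forces $Y^{**}=Y$ and collapses the distinction between weak and weak-star topologies on $Y^*$.
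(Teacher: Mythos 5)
Your argument is correct and follows essentially the same route as the paper's proof: both reduce to the argument of Theorem \ref{pasoDSD2P} via the quotient $\pi\colon X^*\to X^*/Y^\circ=Y^*$, using reflexivity of $Y$ first to obtain strong regularity of $Y^*$ and then to guarantee that the slices produced by strong regularity are weak-star slices (since the defining functionals lie in $Y^{**}=Y\subseteq X$), so that their $\pi$-preimages remain relatively weak-star open in $B_{X^*}$. Your identification of this dualization of the slices as the key point where reflexivity enters matches the paper exactly.
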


\begin{proof}
Consider $C:=\sum_{i=1}^n \lambda_i W_i$ a convex combination of
non-empty relatively weakly-star open subsets of $B_{Y^\circ}$ and
pick $\sum_{i=1}^n \lambda_i z_i^*\in C$.

Define $\mathcal W_i$ to be the weak-star open subset of $B_{X^*}$
define by $W_i$ for each $i\in\{1,\ldots, n\}$.

Let $\pi:X^*\longrightarrow X^*/Y^\circ$ the quotient map and
define $A_i:=\pi(\mathcal W_i)$.

As $X^*/Y^\circ=Y^*$ is reflexive, then $X^*/Y^\circ$ is strongly
regular, so we can find, for each $i\in\{1,\ldots, n\}$, $a_i$ a
point of strong regularity point of $\overline{A_i}$ whose norm is
as close to zero as desired. Given $i\in\{1,\ldots, n\}$, as $a_i$
is a point of strong regularity, we can find convex combination of
slices containing $a_i$ and whose diameter is as small as wanted.
In addition, because of reflexivity of $X^*/Y^\circ$, convex
combination of slices are indeed convex combination of weak-star
slices, so we can actually find convex combination of weak-star
slices containig to $a_i$ and whose diameter is a closed to zero
as desired for each $i\in\{1,\ldots, n\}$.

Using the previous ideas, the result can be concluded following
word by word the proof of Theorem \ref{pasoDSD2P}.
\end{proof}

Now we shall prove the inheritance of the DSD2P to almost isometric ideals.

\begin{proposition}\label{aidsd2p}

Let $X$ be a Banach space and let $Y\subseteq X$ a closed almost
isometric ideal. If $X$ has the DSD2P, so does $Y$.

\end{proposition}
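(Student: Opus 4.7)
The plan is to adapt the strategy used for the inheritance of the Daugavet property and of the standard diameter two properties to almost isometric ideals: transfer a convex combination of relatively weakly open subsets of $B_Y$ up to $B_X$ via the Hahn--Banach operator $\varphi\colon Y^*\to X^*$ of Theorem~\ref{propiai}, apply the DSD2P of $X$, and pull the resulting witness back to $Y$ through the local operator $T$ from the same theorem. This is essentially the same scheme as in Theorem~\ref{pasoDD2P}, and setting $n=1$ in it recovers the preceding statement on the DD2P.

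To begin, I would fix a convex combination $C=\sum_{i=1}^n\lambda_iW_i$ of relatively weakly open sets of $B_Y$, with each $W_i=\{y\in B_Y:\ \vert y_{ij}^*(y-c_i)\vert<\eta_{ij},\ 1\le j\le n_i\}$, a point $y_0=\sum_{i=1}^n\lambda_iy_i^0\in C$ with $y_i^0\in W_i$, and $\varepsilon>0$. I would then choose small auxiliary numbers $\widehat\eta_{ij}\in(\vert y_{ij}^*(y_i^0-c_i)\vert,\eta_{ij})$ and $\varepsilon_1,\delta>0$, to be adjusted at the end. Writing $x_{ij}^*:=\varphi(y_{ij}^*)\in X^*$, the fact that $\varphi$ is Hahn--Banach gives $x_{ij}^*\vert_Y=y_{ij}^*$; hence the sets
\[
\mathcal W_i:=\{x\in B_X:\ \vert x_{ij}^*(x-c_i)\vert<\widehat\eta_{ij},\ 1\le j\le n_i\}
\]
are non-empty (they all contain $y_i^0$), and $y_0\in\mathcal C:=\sum_{i=1}^n\lambda_i\mathcal W_i$.

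Applying the DSD2P of $X$ to $\mathcal C$ and $y_0$ I would obtain $z=\sum_{i=1}^n\lambda_iz_i\in\mathcal C$ with $\Vert y_0-z\Vert>1+\Vert y_0\Vert-\delta$. A second invocation of Theorem~\ref{propiai}, this time with $E=\mathrm{span}\{y_i^0,c_i,z_i:\,i\}\subseteq X$, $F=\mathrm{span}\{y_{ij}^*:\,i,j\}\subseteq Y^*$, and tolerance $\varepsilon_1$, produces a linear operator $T\colon E\to Y$ satisfying conditions (1)--(3). I would then set $\tilde w_i:=T(z_i)/(1+\varepsilon_1)\in Y$ and $\tilde z:=\sum_{i=1}^n\lambda_i\tilde w_i=T(z)/(1+\varepsilon_1)$. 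Property~(2) ensures $\Vert\tilde w_i\Vert\le 1$, while property~(3) combined with $x_{ij}^*\vert_Y=y_{ij}^*$ gives
\[
\vert y_{ij}^*(\tilde w_i-c_i)\vert\le\frac{\widehat\eta_{ij}}{1+\varepsilon_1}+\frac{\varepsilon_1\vert x_{ij}^*(c_i)\vert}{1+\varepsilon_1},
\]
which for $\varepsilon_1$ small enough stays below $\eta_{ij}$; hence $\tilde w_i\in W_i$ and $\tilde z\in C$.

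For the norm lower bound, I would use that $y_0\in E\cap Y$ implies $T(y_0)=y_0$ by~(1), so $T(y_0-z)=y_0-T(z)$; property~(2) then yields $\Vert y_0-T(z)\Vert\ge\Vert y_0-z\Vert/(1+\varepsilon_1)$. A triangle-inequality correction for the rescaling by $(1+\varepsilon_1)$ produces $\Vert y_0-\tilde z\Vert>1+\Vert y_0\Vert-\varepsilon$ once $\varepsilon_1$ and $\delta$ are chosen small enough in terms of $\varepsilon$ and $\Vert y_0\Vert$. The main obstacle is precisely the simultaneous balancing of three independent perturbations---the tightening $\widehat\eta_{ij}<\eta_{ij}$, the $\varepsilon_1$-distortion of $T$, and the rescaling by $(1+\varepsilon_1)$ keeping $\tilde w_i\in B_Y$---so that the functional conditions $\tilde w_i\in W_i$ are preserved \emph{and} the lower bound $1+\Vert y_0\Vert-\varepsilon$ survives; this bookkeeping is of the same kind as the one carried out in Theorem~\ref{pasoDD2P}.
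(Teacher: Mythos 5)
Your proposal is correct and follows essentially the same route as the paper's proof: lift the convex combination to $B_X$ via the Hahn--Banach operator $\varphi$, apply the DSD2P of $X$ there, and pull the witness back into $B_Y$ with the locally almost isometric operator $T$ of Theorem~\ref{propiai}, rescaling by $1+\varepsilon_1$ and using $T(y_0)=y_0$ together with property (2) to preserve the lower bound. The only cosmetic difference is that the paper phrases the weakly open sets as intersections of slices and encodes the slack via the slice widths $1-y_{ij}^*(y_i)$ rather than via your tightened $\widehat\eta_{ij}$; the bookkeeping is the same.
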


\begin{proof}

Pick $C:=\sum_{i=1}^n \lambda_i \bigcap\limits_{j_1}^{n_i}
S(B_Y,y_{ij}^*,\alpha_{ij})$ a convex combination of non-empty
relatively weakly open subsets of $B_Y$, choose $\sum_{i=1}^n
\lambda_i y_i\in C$ and pick $\varepsilon>0$. Our aim is to find
$\sum_{i=1}^n \lambda_i z_i\in C$ such that $\left\Vert
\sum_{i=1}^n \lambda_i (y_i-z_i)\right\Vert>1+\left\Vert
\sum_{i=1}^n \lambda_i y_i\right\Vert-\varepsilon$. Assume, with
no loss of generality, that $\max\limits_{1\leq i\leq
n}\max\limits_{1\leq j\leq n_i}y_{ij}^*(y_i)<1$.

Choose $\mu_0>0$ such that
\begin{equation}\label{aiDSD2Pcondimufunc}
0<\mu<\mu_0\Rightarrow \frac{y_{ij}^*(y_i)}{1+\mu}>1-\alpha_{ij}\
\forall i\in\{1,\ldots, n\}, j\in\{1,\ldots, n_i\}.
\end{equation}
and
\begin{equation}\label{aiDSD2Pcondimuestima}
0<\mu<\mu_0\Rightarrow
\frac{\frac{1}{1+\mu}\left(1+\left\Vert\sum_{i=1}^n \lambda_i
y_i\right\Vert-\mu \right)-\mu}{1+\mu}>1+\left\Vert\sum_{i=1}^n
\lambda_i y_i\right\Vert-\varepsilon
\end{equation}
Now consider $0<\mu<\mu_0$ and  $\varphi:Y^*\longrightarrow X^*$ a
Hahn-Banach operator satisfying the properties described in
Theorem \ref{propiai}. Define
$$\widehat{C}:=\sum_{i=1}^n \lambda_i \bigcap\limits_{j=1}^{n_i} S(B_X,\varphi(y_{ij}^*),1-y_{ij}^*(y_i)).$$
As $X$ has the DSD2P and clearly $\sum_{i=1}^n \lambda_i y_i\in
\widehat{C}$ we can conclude the existence of an element
$\sum_{i=1}^n \lambda_i x_i\in \widehat{C}$ such that
\begin{equation}\label{DSD2Paidistaelem}
\left\Vert \sum_{i=1}^n \lambda_i
(y_i-x_i)\right\Vert>1+\left\Vert\sum_{i=1}^n \lambda_i
y_i\right\Vert-\mu.
\end{equation}
Now for $\mu$, $E:=span\{x_1,\ldots, x_n, y_1,\ldots,
y_n\}\subseteq E$ and $F:=span\{y_{ij}^*\ /\ i\in\{1,\ldots, n\},
j\in \{1,\ldots, n_i\}\}\subseteq Y^*$ consider $T$ the operator
satisfying the properties described in Theorem \ref{propiai}.
Given $i\in\{1,\ldots, n\}$ one has
$$\Vert T(x_i)\Vert\leq (1+\mu)\Vert x_i\Vert\leq 1+\mu.$$
So, if we define $z:=\sum_{i=1}^n \lambda_i \frac{T(x_i)}{1+\mu}$,
it is clear that $z\in B_Y$. We will prove that indeed $z\in C$.
To this aim, pick $i\in\{1,\ldots, n\}$ and $j\in\{1,\ldots,
n_i\}$. Hence
$$y_{ij}^*\left(\frac{T(x_i)}{1+\mu} \right)=\frac{y_{ij}^*(T(x_i))}{1+\mu}=\frac{\varphi(y_{ij}^*)(x_i)}{1+\mu}>\frac{1-(1-y_{ij}^*(y_i))}{1+\mu}=\frac{y_{ij}^*(y_i)}{1+\mu}$$
$$\mathop{>}\limits^
{\mbox{(\ref{aiDSD2Pcondimufunc})}} 1-\alpha_{ij}.$$
Thus $z\in C$. Finally, we have that
$$\left\Vert \sum_{i=1}^n \lambda_i y_i-z\right\Vert=\left\Vert \sum_{i=1}^n \lambda_i \left(y_i-\frac{T(x_i)}{1+\mu} \right) \right\Vert=\frac{\left\Vert \sum_{i=1}^n \lambda_i(y_i-T(x_i)+\mu y_i)\right\Vert}{1+\mu}$$
$$\geq \frac{\left\Vert T\left( \sum_{i=1}^n \lambda_i (y_i-x_i)\right)\right\Vert-\mu\sum_{i=1}^n \lambda_i \Vert y_i\Vert}{1+\mu}>\frac{\frac{1}{1+\mu}\left\Vert \sum_{i=1}^n \lambda_i(y_i-x_i)\right\Vert-\mu}{1+\mu}$$
$$\mathop{>}\limits^{\mbox
{(\ref{DSD2Paidistaelem})}}\frac{\frac{1}{1+\mu}\left(1+\left\Vert\sum_{i=1}^n
\lambda_i y_i\right\Vert-\mu
\right)-\mu}{1+\mu}\mathop{>}\limits^{\mbox{(\ref
{aiDSD2Pcondimuestima})}}  1+\left\Vert\sum_{i=1}^n \lambda_i
y_i\right\Vert-\varepsilon.$$
As $\varepsilon$ was arbitrary we conclude that $Y$ has the DSD2P,
so we are done.

\end{proof}

\section{Some remarks and open questions.}

\par
\bigskip

Let us consider the following diagram
$$\begin{array}{ccccccc}
DP & \mathop{\Longrightarrow}\limits^{(1)} & DSD2P & \mathop{\Longrightarrow}\limits^{(2)} & DD2P & \mathop{\Longrightarrow}\limits^{(3)} & DLD2P\\
\ & \ & \Downarrow (4) & \ & \Downarrow (5) & \ & \Downarrow (6)\\
\ & \ & w^*-DSD2P & \mathop{\Longrightarrow}\limits^{(7)}&
w^*-DD2P & \mathop{\Longrightarrow}\limits^{(8)}& w^*-DLD2P
\end{array}$$
where the last row only make sense in dual Banach spaces. By
Example \ref{ejeDD2Pnodauga} or Theorem \ref{DD2Pestableprodu},
neither the converse implication of (2) nor the one of (7) holds.
In addition, there are Banach spaces with the Daugavet property
whose dual unit ball have denting points (e.g. $\mathcal
C([0,1])$). Consequently, the converse of (4),(5) or (6) is not
true.

However, it remains open the following

\begin{question}
Does the converse of (1),(3) or (8) hold?
\end{question}

It is known that a Banach space $X$ has the DLD2P if, and only if,
$X^*$ has the $w^*$-DLD2P. This fact arise two questions.

\begin{question}

Let $X$ be a Banach space. Is it true that $X$ has the DD2P if, and only if, $X^*$ has the
$w^*$-DD2P?

\end{question}

A similar question remains open for the DSD2P.

\begin{question}

Let $X$ be a Banach space. Is it true that $X$ has the DSD2P if, and only if, $X^*$ has the
$w^*$-DSD2P?
\end{question}

Check that a positive answer to the above question would provide,
by Proposition \ref{caraDSD2Pdualredes} and a similar argument to
the one done in Theorem \ref{DSD2Pestainfinsum} to the dual space,
a positive answer to the following
\begin{question}
Let $X,Y$ Banach space. Does $X\oplus_1 Y$ enjoy to have the DSD2P whenever $X$ and $Y$
have the DSD2P?
\end{question}

\bigskip


\begin{thebibliography}{999999}


\bibitem {abl}
M.D. Acosta, J. Becerra Guerrero and G. L\'opez-P\'erez, \textit{
Stability results on diameter two properties,} J. Conv. Anal., v.
22, n. 1 (2015), 1-17.



\bibitem {ahntt} T. Abrahamsen, P. H\'ajek, O. Nygaard, J. Talponen and S. Troyanski, \textit{Diameter 2 properties and convexity}, preprint.

\bibitem {aln} T.A. Abrahamsen, V. Lima, O. Nygaard, \textit{Almost isometric ideals in Banach spaces}, Glasgow Math. J. 56 (2014) 395-407.


\bibitem {blro} J. Becerra, G. L\'opez and  A. Rodr\'iguez, \textit{Relatively weakly open sets in closed balls of C*-algebras}, Journal of the London Mathematical Society 68 (2003), 753-761.


\bibitem{avd} J. Becerra Guerrero, G. L\'opez-P\'erez and A. Rueda Zoca,
{\it Extreme differences between weakly open subsets and convex
combinations of slices in Banach spaces.} Adv. in Math. 269
(2015), 56-70

\bibitem {blr3} J.Becerra Guerrero, G. L\'opez P\'erez and A. Rueda Zoca, \textit{Octahedral norms and convex combination of slices in Banach spaces}, Jour. Funct. Anal. 266 (2014) 2424-2435.


\bibitem {blr} J.Becerra Guerrero, G. L\'opez P\'erez and A. Rueda Zoca, \textit{Subspaces of Banach spaces with big slices}, to appear in Banach J. Math. Anal.


 





 


 



\bibitem {hlp} R. Haller, J. Langemets, and M. P\~oldvere, \textit{On duality of diameter 2 properties}, J. Conv. Anal. 22 (2015), no. 2, 465-483.





 


 
 


\bibitem {gines} G. L\'opez-P\'erez, \textit{The big slice phenomena in M-embedded and L-embedded spaces}, Proc. Amer. Math. Soc. 134 (2005),
273-282.


\bibitem {ik} Y. Ivakhno and V. Kadets, \textit{Unconditional sums of spaces with bad projections}, (2006),
Available at ResearchGate.

 

\bibitem {shi} R. V. Shvidkoy, \textit{Geometric aspects of the Daugavet property}, J. Funct. Anal. 176 (2000),
198-212.


\end{thebibliography}
\end{document}